   \def\sH{{\mathfrak H}}   
      \def\sL{{\mathfrak L}}
\def\dC{{\mathbb C}}
   \def\cH{{\mathcal H}}
\def\Lin{{\sL}}
\def\gl{{\lambda}}
\newcommand*{\ceil} [1]{\lceil#1\rceil}
\newcommand*{\R}{{\mathbb{R}}}     
\newcommand*{\N}{{\mathbb{N}}}     
\newcommand{\eq}[1]{\begin{align*}#1\end{align*}}
\newcommand{\eqn}[1]{\begin{align}#1\end{align}}
\newcommand*{\abs} [1]{\lvert#1\rvert}
\newcommand*{\set} [1]{\{#1\}}
\newcommand*{\iprod}[2]{\langle#1,#2\rangle}    
\DeclareMathOperator{\re}{Re}			
\def\t#1{{{\tilde #1} }}
\def\wt#1{{{\widetilde #1} }}
\def\bm\chi{\mbox{\boldmath$\chi$}}
\def\ker{{\rm ker\,}}
\def\ran{{\rm ran\,}}
\def\dom{{\rm dom\,}}
\def\Dom{{\rm dom\,}}
\def\dist{{\rm dist\,}}
\let\xker=\ker \def\ker{{\xker\,}}
\def\spann{{\rm span}}
\def\sign{{\rm sign\,}}
\newcommand{\pmat}[1]{\begin{pmatrix}#1\end{pmatrix}}
\newcommand*{\norm}[1]{\lVert#1\rVert}
\renewcommand {\l}{\lambda}
\renewcommand {\a}{\alpha}
\renewcommand {\b}{\beta}
\renewcommand {\t}{\theta}
\newcommand {\e}{\varepsilon}
\renewcommand {\L}{\Lambda}
\newcommand {\ov}{\overline}
\newcommand {\m}{\mu}
\newcommand {\gb}{\beta}
\renewcommand {\gg}{\gamma}
\newcommand{\WPgamma}{\eta}
\newcommand{\WPgammaRan}{(0,\WPgamma_0]}
\newcommand{\deltaRan}{(0,\delta_0]}
\newcommand{\Mpert}{M_T}
\newtheorem{theorem}{Theorem}[section]
\newtheorem{proposition}[theorem]{Proposition}
\newtheorem{corollary}[theorem]{Corollary}
\newtheorem{lemma}[theorem]{Lemma}
\theoremstyle{definition}
\newtheorem{example}[theorem]{Example}
\newtheorem{remark}[theorem]{Remark}
\newtheorem{definition}[theorem]{Definition}
\numberwithin{equation}{section}
\begin{document}

\title{Robustness of polynomial stability of damped wave equations}
\author{Dmytro Baidiuk}
\address[D. Baidiuk]{Mathematics and Statistics, Faculty of Information Technology and Communication Sciences, Tampere University, P.O. Box 692, 33101 Tampere, Finland}
\email{baydyuk@gmail.com}

\author{Lassi Paunonen}
\address[L. Paunonen]{Mathematics and Statistics, Faculty of Information Technology and Communication Sciences, Tampere University, P.O. Box 692, 33101 Tampere, Finland}
\thanks{The research was funded by the Academy of Finland grants 298182 and 310489 held by L. Paunonen.}

\email{lassi.paunonen@tuni.fi}
\subjclass[2010]{35L05, 47A55, 47D06, 93D09}

\keywords{Wave equation; polynomial stability; robustness; strongly continuous semigroup;  Webster's equation}
\begin{abstract}
In this paper we present new results on the preservation of polynomial stability of damped wave equations under addition of perturbing terms. We in particular introduce sufficient conditions for the stability of perturbed  two-dimensional wave equations on rectangular domains, a one-dimensional weakly damped Webster's equation, and a wave equation with an acoustic boundary condition. In the case of Webster's equation, we use our results to compute explicit numerical bounds that guarantee the polynomial stability of the perturbed equation.
\end{abstract}

\maketitle

\section{Introduction}

In this paper we study the stability properties of
damped wave equations and 
abstract second-order differential equations of the form~\cite{EngNag00book,TucWei09book}
\begin{equation}\label{GWE}
  \left\{
  \begin{array}{ll}
    w_{tt}(t)-L w(t)+D_0D_0^*w_t(t)=0,    & \qquad t>0,\\
    w(0)=w_0,\quad
    w_t(0)=w_1 
  \end{array}\right.
\end{equation}
on a Hilbert space $X_0$.
Here $L: \Dom L \subset X_0\to X_0$ is a negative self-adjoint operator with a bounded inverse and
$D_0\in \sL(U,X_0)$ for some Hilbert space $U$.
Our main interest is in the preservation of stability under bounded perturbations in the situation where the unperturbed differential equation~\eqref{GWE} is only \emph{polynomially stable}~\cite{BatEng06,BorTom10} (as opposed to being uniformly exponentially stable). The polynomial stability of~\eqref{GWE} means that there exist constants $\alpha,M>0$ such that
for all initial conditions $w_0\in \Dom L $ and $w_1\in \Dom (-L)^{1/2}$ the solutions of~\eqref{GWE} satisfy~\cite{BorTom10}
\begin{align}
  \label{GWEpolstab}
\norm{(-L)^{1/2}w(t)}^2 + \norm{w_t(t)}^2\leq \frac{M}{t^{2/\alpha}}
\left(
  \norm{Lw_0}^2 + \norm{(-L)^{1/2}w_1}^2
\right),
\quad t>0.
\end{align}
Polynomial stability has been investigated in detail in the literature for
damped wave equations on multi-dimensional domains~\cite{LiuRao05,BurHit07,AnaLea14}, coupled partial differential equations~\cite{ZhaZua07,Duy07,AvaTri13,RivAvi15}, as well as abstract damped second-order systems of the form~\eqref{GWE}~\cite{AmmTuc01,LiuZha15,AmmBch17,DelPat21,ChiPau19arxiv}.

Polynomial stability is a strictly weaker concept than exponential stability, and it can in particular be destroyed under addition of arbitrarily small lower order terms in the partial differential equation.
In this paper we employ and refine the general framework introduced in~\cite{Pau12,Pau14c} to
present conditions for preservation of the polynomial stability of the abstract differential equation~\eqref{GWE} under finite-rank and Hilbert--Schmidt perturbations.
Moreover, we study preservation of polynomial stability for
selected partial differential equation models, namely, a damped two-dimensional wave equation on a rectangular domain, a weakly damped Webster's equation, and a one-dimensional wave equation with a dynamic boundary condition.

As our first main results we present
general conditions for the polynomial stability
of perturbed second-order systems of the form
\begin{equation}\label{PGWE}
  \left\{
  \begin{array}{ll}
    w_{tt}(t)-L w(t)+D_0D_0^*w_t(t)
    =B_2(C_1w(t)+ C_2w_t(t)),  & \quad t>0\\
    w(0)=w_0,\quad
    w_t(0)=w_1 .
  \end{array}\right.
\end{equation}
Here the operators $B_2\in \sL(Y,X_0)$, $C_1\in\sL(\dom (-L)^{1/2},Y)$, and $C_2\in \sL(X_0,Y)$ for some Hilbert space $Y$ describe the perturbations to the nominal polynomially stable equation~\eqref{GWE}.
As our first main results we adapt and improve the main results in~\cite{Pau12,Pau14c} to make them more easily verifiable for second-order systems of the form~\eqref{PGWE}.
Our results show that if the unperturbed equation is polynomially stable so that~\eqref{GWEpolstab} is satisfied with some $\alpha\in(0,2]$, then~\eqref{PGWE} is polynomially stable provided that
for exponents $\beta,\gamma\in[0,1]$ satisfying $\beta+\gamma\geq \alpha$ the graph norms $\norm{(-L)^{\beta/2}B_2}$, $\norm{(-L)^{(\gamma-1)/2}C_1^\ast}$, and $\norm{(-L)^{\gamma/2}C_2^\ast}$ are finite and sufficiently small.
  Our new results also provide 
  concrete bounds for the required sizes of these graph norms based 
  on lower bounds for the operator $D_0^\ast $ restricted to the spectral subspaces of $L$.
The results are applicable in the situations where $Y$ is either finite-dimensional or where $B_2$, $C_1$, and $C_2$ are Hilbert--Schmidt operators.

As the first concrete partial differential equation we study
a wave equation with viscous damping on a rectangle $\Omega=(0,a)\times (0,b)$,
\begin{equation}
  \label{E:DWE}
  \left\{  \begin{array}{ll}
    w_{tt}(t,x,y)-\Delta w(t,x,y)+d(x,y)w_t(t,x,y)=0,   \quad \qquad&t>0, (x,y)\in \Omega\\
    w(t,x,y)=0, \hspace{3cm} & t>0,(x,y)\in\partial \Omega.
  \end{array}
  \right.
\end{equation}
We assume the damping coefficient $d(\cdot,\cdot)\geq 0$ is strictly positive on some non-empty open subset of $\Omega$ which does not satisfy the Geometric Control Condition (see, e.g., \cite{AnaLea14}).
We apply our abstract results to present conditions for the polynomial stability of perturbed wave equations of the form
    \begin{equation*}
    \begin{split}
      &w_{tt}(t,x,y)-\Delta w(t,x,y)+d(x,y)w_t(t,x,y)\\
      &\quad =\sum_{k=1}^m b_{k,2}(x,y)\int\limits_\Omega (w(t,\xi,\eta) c_{k,1}(\xi,\eta)+ w_t(t,\xi,\eta)c_{k,2}(\xi,\eta))d\xi d\eta
      \end{split}
    \end{equation*}
    where $b_{k,2},c_{k,1},c_{k,2}\in L^2(\Omega)$.
    In particular, our results show that the perturbed wave equation is polynomially stable provided that the coefficient functions $b_{k,2}$, $c_{k,1}$, and $c_{k,2}$ have sufficient \emph{smoothness properties} in the sense that these functions belong to fractional domains of $-\Delta$ (the Dirichlet Laplacian on $\Omega$), and the associated fractional graph norms
    are sufficiently small.
We present also analogous results of Hilbert--Schmidt perturbations of~\eqref{E:DWE}.
Finally, we analyse the stability of~\eqref{E:DWE} in a situation where the damping term is perturbed in a non-dissipative way with a rank one operator.

Our second concrete partial differential equation is a Webster's equation with a weak damping on $(0,1)$,
\begin{equation*}
  \left\{
  \begin{array}{ll}
    w_{tt}(t,x)
    - w_{xx}(t,x)-aw_x(t,x)
    +d(x)\int_0^1w_t(t,\xi)d(\xi)e^{a\xi} d\xi =0    \\
    w(t,0)=w(t,1)=0  \\
    w(0,x)=w_0(x),\quad w_t(0,x)=w_1(x),
  \end{array}\right.
\end{equation*}
 where $a>0$ and $d(\cdot)\in L^2(0,1)$ is the damping coefficient. In this article we focus on a 
  special case where $d(x)=1-x$. 
 We begin by proving that the Webster's equation is polynomially stable with this particular damping coefficient.
 Using our abstract results we then present conditions for the preservation of the Webster's equation under addition of a perturbation term.
  We also present a numerical example where we 
  compute numerical bounds for the coefficient functions in the perturbation to guarantee the preservation of polynomial stability
  of the Webster's equation.

Finally, as our third partial differential equation we consider a one-dimensional wave equation with a dynamic boundary condition. 
The polynomial stability of this model was shown in~\cite{RivQin03,AbbNic13}, 
and in this paper we present conditions for the preservation of the stability under addition of perturbation terms to the differential equation.

We use the following notation. Given a closed operator $A$ on a Hilbert space $X$, which will be assumed to be complex, we denote its domain by $\dom A$, its kernel by $\ker A$, and its range by $\ran A$. The spectrum of $A$ is denoted by $\sigma(A)$, and given $\l\in \rho(A):=\mathbb C\setminus\sigma (A)$ we write $R(\l,A)$ for the resolvent operator $(\l-A)^{-1}$. The space of bounded linear operators on $X$ is denoted by $\sL(X)$.  
Given two functions $f,g:(0,\infty)\to\mathbb R_+$, we write $f(t)=O(g(t))$ to indicate that $f(t)\leq Cg(t)$ for some constant $C>0$ and for all sufficiently large $t>0$.

\section{Robustness of stability for generalized wave equations}\label{S:GenWE}
\begin{subsection}{Polynomial stability of strongly continuous semigroups}
  \label{sec:PolStab}

The second-order differential equation~\eqref{GWE}
with a negative and boundedly invertible operator  $L: \Dom L \subset X_0\to X_0$  and 
$D_0\in \sL(U,X_0)$ 
 can be represented as a first-order abstract Cauchy problem with state
$u(t)=
(  w(t),
  w_t(t)
)^\top$
as
\begin{equation*}
 \frac{d u}{d t}=Au,\quad \text{ where }\quad
A=\begin{pmatrix}
  0&I\\
  L & -D_0D_0^*
\end{pmatrix}
\end{equation*}
with the initial condition $u(0)=(w_0,w_1)^\top$.
We choose the state space of this linear system as
$$
\cH=\dom (-L)^{1/2}\times X_0.
$$
The space $\cH$  is a Hilbert space with inner product defined by
$$
\langle u,v\rangle_\cH=\left\langle (-L)^{1/2}u_1,(-L)^{1/2}v_1\right\rangle_{X_0}+\langle u_2,v_2\rangle_{X_0}
$$
for all $u=(u_1,u_2)^\top,v=(v_1,v_2)^\top\in\cH$.
The domain of $A$ is $\dom A=\dom L\times \dom (-L)^{1/2}$.
The operator $A$ has the form $A=A_0-DD^\ast$ where
\eqn{
  \label{eq:A0D0}
  A_0
=\begin{pmatrix}
  0&I\\
  L & 0
\end{pmatrix}: \Dom A \subset \cH\to \cH \qquad \mbox{and} \qquad D = 
\begin{pmatrix}
  0\\D_0
\end{pmatrix}\in \Lin(U,\cH).
}
Here $A_0$ is a skew-adjoint operator and $A$  generates a strongly continuous semigroup $T(t)$ on $\cH$ by the Lumer-Phillips theorem~\cite[Sec.~VI.3]{EngNag00book}.

\begin{definition}[{\cite{BorTom10}}]
  \label{PolSt}
 A strongly continuous semigroup $T(t)$ generated by a linear operator $A$ is said to be \emph{polynomially stable} with $\a>0$ if it is uniformly bounded, i.e. $\sup_{t\geq0}\|T(t)\|<\infty$, if $i\mathbb R\subset\rho(A)$, and if
 \begin{equation*}
 \left\|T(t)A^{-1}\right\|\leq \frac{M}{t^{1/\alpha}},\quad \text{for all }t>0
 \end{equation*}
 for some constant $M>0$.
 \end{definition}

 \subsection{Polynomial stability of perturbed semigroups}

 We are interested in robustness of the polynomial stability of~\eqref{GWE} under perturbations of the form
 \begin{equation}\label{PGWE2}
  \left\{
  \begin{array}{ll}
    w_{tt}(t)-L w(t)+D_0D_0^*w_t(t)
    =B_2(C_1w(t)+ C_2w_t(t)),  & \quad t>0\\
    w(0)=w_0,\quad
    w_t(0)=w_1 & \\
  \end{array}\right.
\end{equation}
where $B_2\in \Lin(Y,X_0)$,
$C_1\colon \Dom  (-L)^{1/2}\subset X_0\to Y$,
and $C_2\in \Lin(X_0, Y)$ for some Hilbert space $Y$ are such that  $C_1(-L)^{-1/2}\in \Lin(X_0,Y)$.
If we define $B:=(0,B_2)^\top\in\sL(Y,\cH)$ and  $C:=(C_1,C_2)\in\sL(\cH,Y)$,
 the perturbed system can be represented as an abstract Cauchy problem $\frac{d u}{d t}=(A+BC)u$.
The following theorem presented in~\cite{Pau14c} provides general conditions for the preservation of the polynomial stability of the semigroup $T_{A+BC}(t)$ generated by $A+BC$.
\begin{theorem}[{\cite[Thm.~6]{Pau14c}}]
  \label{T:1.1}
   Assume $T(t)$ generated by $A$ is polynomially stable with $\alpha>0$, let $\beta,\gamma\geq 0$ be such that $\beta+\gamma\geq\alpha$, 
   and let $\kappa>0$
      satisfy 
      $$
      \kappa<  \frac{1}{\sup_{\lambda\in\dC_+}\norm{R(\lambda,A)(-A)^{-\beta-\gamma}}^{1/2}}.
      $$
If $B\in\sL(Y,\cH)$ and $C\in\sL(\cH,Y)$ are such that
   \begin{equation}\label{E:1.2}
     \ran B\subset\dom(-A)^\beta, \quad \ran C^*\subset\dom(-A^*)^\gamma,
   \end{equation}
   if $(-A)^\beta B$ and $(-A^*)^\gamma C^*$ are Hilbert--Schmidt operators, and if
   \begin{equation}\label{E:L}
   \left\|(-A)^\beta B\right\|<\kappa,\quad\left\|(-A^*)^\gamma C^*\right\|<\kappa,
    \end{equation}
    then the semigroup generated by $A+BC$ is polynomially stable with the same $\alpha$.
 \end{theorem}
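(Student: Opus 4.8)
The plan is to verify the three conditions in the Borichev--Tomilov characterization of polynomial stability~\cite{BorTom10}: since $BC\in\sL(\cH)$ is bounded, $A+BC$ generates a $C_0$-semigroup $T_{A+BC}(t)$, and it then suffices to show that this semigroup is uniformly bounded, that $i\dR\subset\rho(A+BC)$, and that $\norm{R(is,A+BC)}=O(|s|^\alpha)$ as $|s|\to\infty$. The analytic core is a perturbation formula for the resolvent combined with the smallness condition on $\kappa$; uniform boundedness will be the delicate point.

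First I would record the factorizations coming from~\eqref{E:1.2}. Writing $B_\beta:=(-A)^\beta B$ and $\tilde C:=\big((-A^*)^\gamma C^*\big)^\ast$, both bounded by hypothesis, we have $B=(-A)^{-\beta}B_\beta$ and $C=\tilde C\,(-A)^{-\gamma}$, with $\norm{B_\beta}=\norm{(-A)^\beta B}<\kappa$ and $\norm{\tilde C}=\norm{(-A^*)^\gamma C^*}<\kappa$. Since $(-A)^{-\beta}$, $(-A)^{-\gamma}$ and $R(\lambda,A)$ commute, for $\lambda\in\dC_+$ we get $CR(\lambda,A)B=\tilde C\,R(\lambda,A)(-A)^{-\beta-\gamma}\,B_\beta$, so that, with $S:=\sup_{\lambda\in\dC_+}\norm{R(\lambda,A)(-A)^{-\beta-\gamma}}$,
\begin{equation*}
\norm{CR(\lambda,A)B}\le \norm{\tilde C}\,S\,\norm{B_\beta}<\kappa^2 S<1 ,
\end{equation*}
the last inequality being exactly the hypothesis $\kappa<S^{-1/2}$. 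Here $S<\infty$ is where $\beta+\gamma\ge\alpha$ enters: the polynomial resolvent growth $\norm{R(is,A)}=O(|s|^\alpha)$ of~\cite{BorTom10}, combined with the high-frequency smoothing of $(-A)^{-\beta-\gamma}$, gives $\norm{R(is,A)(-A)^{-\beta-\gamma}}=O(|s|^{\alpha-\beta-\gamma})=O(1)$ on $i\dR$, and a Phragm\'en--Lindel\"of argument based on $\norm{R(\lambda,A)}\le M/\re\lambda$ transfers this bound to all of $\dC_+$.

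Consequently $I-CR(\lambda,A)B$ is boundedly invertible on $Y$ for every $\lambda\in\dC_+$, with $\norm{(I-CR(\lambda,A)B)^{-1}}\le(1-\kappa^2S)^{-1}$ uniformly, and by continuity of the resolvent up to $i\dR$ the same holds on the imaginary axis. The Sherman--Morrison--Woodbury identity
\begin{equation*}
R(\lambda,A+BC)=R(\lambda,A)+R(\lambda,A)B\,(I-CR(\lambda,A)B)^{-1}\,CR(\lambda,A)
\end{equation*}
is then valid on $\overline{\dC_+}$, which yields $i\dR\subset\rho(A+BC)$. To control the growth on $i\dR$ I would interpolate: the moment inequality for fractional powers of $-A$ gives $\norm{R(is,A)(-A)^{-\beta}}\le\norm{R(is,A)}^{\gamma/(\beta+\gamma)}\norm{R(is,A)(-A)^{-\beta-\gamma}}^{\beta/(\beta+\gamma)}=O(|s|^{\alpha\gamma/(\beta+\gamma)})$, and likewise $\norm{R(is,A)(-A)^{-\gamma}}=O(|s|^{\alpha\beta/(\beta+\gamma)})$. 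Since $R(is,A)B=R(is,A)(-A)^{-\beta}B_\beta$ and $CR(is,A)=\tilde C(-A)^{-\gamma}R(is,A)$, substituting into the Woodbury identity and using the uniform bound on the middle factor gives
\begin{equation*}
\norm{R(is,A+BC)}\le \norm{R(is,A)}+O\!\big(|s|^{\alpha\gamma/(\beta+\gamma)}\big)\cdot O(1)\cdot O\!\big(|s|^{\alpha\beta/(\beta+\gamma)}\big)=O(|s|^\alpha).
\end{equation*}

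The hard part will be the uniform boundedness of $T_{A+BC}(t)$, which does \emph{not} follow from bounded perturbation theory. Here I would use that the smoothness conditions~\eqref{E:1.2} make $B$ an admissible control operator and $C$ an admissible observation operator for the bounded semigroup $T(t)$, and this is precisely where the Hilbert--Schmidt hypothesis is needed: the pointwise decay $\norm{T(t)(-A)^{-\gamma}}=O(t^{-\gamma/\alpha})$ does not by itself yield the square-integrability required for admissibility, which must instead be obtained by summing the contributions of the spectral subspaces of $A$, where the Hilbert--Schmidt norms of $(-A)^\beta B$ and $(-A^*)^\gamma C^*$ provide the needed summability. Given admissibility, the smallness condition $\sup_{\lambda\in\dC_+}\norm{CR(\lambda,A)B}<1$ established above makes the closed-loop (feedback) system associated with $A+BC$ well posed with a uniformly bounded transfer function, whence $T_{A+BC}(t)$ is uniformly bounded. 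With boundedness, $i\dR\subset\rho(A+BC)$ and the estimate $\norm{R(is,A+BC)}=O(|s|^\alpha)$ in hand, the Borichev--Tomilov theorem~\cite{BorTom10} delivers polynomial stability of $T_{A+BC}(t)$ with the same exponent $\alpha$.
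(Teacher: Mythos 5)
A preliminary remark: the paper does not contain its own proof of Theorem~\ref{T:1.1} --- it is quoted from \cite[Thm.~6]{Pau14c} --- so your attempt must be measured against the argument in that reference. Your first half is correct and is essentially the same frequency-domain machinery used there: the factorizations $B=(-A)^{-\beta}(-A)^\beta B$ and $C=\bigl((-A^*)^\gamma C^*\bigr)^*(-A)^{-\gamma}$, the bound $\sup_{\lambda\in\dC_+}\norm{CR(\lambda,A)B}\leq \kappa^2 S<1$ with $S:=\sup_{\lambda\in\dC_+}\norm{R(\lambda,A)(-A)^{-\beta-\gamma}}$, the Sherman--Morrison--Woodbury identity giving $i\dR\subset\rho(A+BC)$, and the moment-inequality interpolation yielding $\norm{R(is,A+BC)}=O(\abs{s}^{\alpha})$ are all sound and match \cite{Pau12,Pau14c}. (A minor point: the passage from the boundary bound to all of $\dC_+$ needs no Phragm\'en--Lindel\"of argument; the resolvent identity $R(\xi+is,A)Q=R(is,A)Q-\xi R(\xi+is,A)R(is,A)Q$ gives it directly, exactly as invoked via \cite[Lem.~5.3]{BatChi16} in the proof of Theorem~\ref{thm:kappaestimate}.)

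The genuine gap is the uniform boundedness of $T_{A+BC}(t)$, which you rightly flag as the crux but do not actually prove. Two concrete problems. First, ``summing the contributions of the spectral subspaces of $A$'' is not available: in Theorem~\ref{T:1.1} the operator $A$ is a general generator (in this paper $A=A_0-DD^*$, which is not normal), so it has no spectral decomposition to sum over. The mechanism by which the Hilbert--Schmidt hypothesis actually enters in \cite{Pau12,Pau14c} is Plancherel in the \emph{frequency} variable: for a uniformly bounded semigroup and any Hilbert--Schmidt operator $Q$ one has $\xi\int_{\dR}\norm{R(\xi+is,A)Q}_{HS}^2\,ds\leq \pi M^2\norm{Q}_{HS}^2$ (sum the scalar Plancherel identity over an orthonormal basis of $Y$), and uniform boundedness of the perturbed semigroup is then verified through the Gomilko--Shi--Feng resolvent criterion, estimating the cross term $R(\lambda,A)B\,(I-CR(\lambda,A)B)^{-1}\,CR(\lambda,A)x$ \emph{jointly} in $\lambda$. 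Second, your concluding implication --- admissibility of $B$ and $C$ plus a uniformly bounded closed-loop transfer function implies $T_{A+BC}(t)$ uniformly bounded --- is not a citable theorem, and the separate infinite-time admissibility your closed-loop argument would need genuinely fails within the allowed parameter range: for $\beta=0$, $\gamma=\alpha$ the hypotheses impose no smoothing on $B$, and a merely polynomially stable semigroup gives no decay rate (hence no square-integrability of $t\mapsto B^*T(t)^*x$) for non-smooth data; smallness of $\kappa$ cannot repair this, since admissibility is invariant under scaling of $B$. So any correct proof must couple the two factors $\norm{R(\lambda,A)B}$ and $\norm{CR(\lambda,A)x}$ in a single frequency-domain estimate, as in \cite{Pau12,Pau14c}; the time-domain feedback argument as you set it up breaks at exactly this point.
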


The following theorem introduces a concrete bound $\kappa>0$ for the norms of the perturbations in Theorem~\ref{T:1.1}
for $A=A_0-DD^\ast$ with a skew-adjoint operator $A_0$
in the important special case
$\gb,\gg\geq 0$ are chosen so that $\gb+\gg=\ceil{\alpha}$ (here $\ceil{\alpha}\in\N$ denotes 
the ceiling of $\alpha>0$).
The first part of the result is a special case of~\cite[Thm.~3.5]{ChiPau19arxiv} with a proof which has been modified in a trivial manner to yield an explicit constant~$M_R>0$.

\begin{theorem}
  \label{thm:kappaestimate}
  Let $X$ and $U$ be Hilbert spaces, and
  assume $A=A_0-DD^\ast$ where $A_0: \Dom A_0 \subset X\to X$ is skew-adjoint and $D\in \Lin(U,X)$.
  Let $P_{(a,b)}\in \Lin(X)$ be the spectral projection of $A_0$ corresponding to the interval $(ia,ib)\subset i\mathbb{R}$.
  Assume there exist $\WPgamma_0,\delta_0>0$ and functions $\WPgamma :\R\to(0,\WPgamma_0] $ and $\delta :\mathbb{R}\to (0,\delta_0]$ such that
  \begin{align}
    \label{eq:WPestimate}
    \norm{D^\ast x}\geq \WPgamma(s)\norm{x}, \qquad \forall x\in\ran P_{(s-\delta(s),s+\delta(s))}.
  \end{align}
  Then 
  \begin{align*}
    \norm{R(is,A)}\leq \frac{M_R}{\WPgamma(s)^{2}\delta(s)^{2}}, \qquad \forall s\in\mathbb{R},
  \end{align*}
  where
  \eq{
    M_R = 2\sqrt{\WPgamma_0^4\delta_0^2 +2\WPgamma_0^2\delta_0^2\norm{D}^2 
+(\delta_0^2  + \WPgamma_0^2 \norm{D}^2+2\norm{D}^4 )^2}.
  }
If there exists $M_0>0$ such that $\WPgamma(s)^{-2}\delta(s)^{-2}\leq M_0(1+|s|^{\alpha}) $ for all $s\in\R$, then for $\gb,\gg\geq 0$ with $\gb+\gg=\ceil{\alpha}$ 
in Theorem~\textup{\ref{T:1.1}}
it is possible to choose any $\kappa>0$ such that  
\begin{align*}
  \kappa &< \frac{1}{ \sqrt{2M_C}}, 
    \end{align*}
    where $M_C>0$ 
    is defined with an arbitrary $s_0>0$ by
  \begin{align*}
  M_C &=
  \max \left\{
    M_RM_0\norm{A^{-1}}^{\ceil{\alpha}} (1+s_0^\alpha),
\frac{M_RM_0(1+s_0^{\alpha})}{s_0^{\ceil{\alpha}}}
+ \sum_{k=1}^{\ceil{\alpha}}
\frac{\norm{A^{-1}}^k}{
  s_0^{\ceil{\alpha}+1-k} }
  \right\}.
\end{align*}
\end{theorem}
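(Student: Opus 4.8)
The plan is to establish the two assertions in turn: first the pointwise resolvent bound on the imaginary axis with its explicit constant, and then the admissible value of $\kappa$.

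\emph{The resolvent estimate.} Fix $s\in\R$ and abbreviate $\WPgamma:=\WPgamma(s)$, $\delta:=\delta(s)$, $P:=P_{(s-\delta,s+\delta)}$ and $Q:=I-P$. For $x\in\dom A$ I set $f:=(is-A)x=(is-A_0+DD^\ast)x$ and first extract an \emph{energy identity}: since $A_0$ is skew-adjoint, $\RE\iprod{(is-A_0)x}{x}=0$, whence $\RE\iprod{f}{x}=\norm{D^\ast x}^2$ and therefore $\norm{D^\ast x}^2\leq\norm{f}\,\norm{x}$. I would then control the two spectral components of $x$ separately. As $P,Q$ commute with $A_0$, applying $Q$ to $(is-A_0)x=f-DD^\ast x$ and using the spectral gap $\norm{(is-A_0)Qx}\geq\delta\norm{Qx}$ (the spectrum of $A_0$ on $\ran Q$ lies at distance $\geq\delta$ from $is$) gives $\norm{Qx}\leq\delta^{-1}(\norm{f}+\norm{D}\,\norm{D^\ast x})$. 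On $\ran P$ the damping lower bound \eqref{eq:WPestimate} yields $\norm{Px}\leq\WPgamma^{-1}\norm{D^\ast Px}\leq\WPgamma^{-1}(\norm{D^\ast x}+\norm{D}\,\norm{Qx})$.

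Combining these through $\norm{x}\leq\norm{Px}+\norm{Qx}$ expresses $\norm{x}$ in terms of $\norm{D^\ast x}$ and $\norm{f}$; inserting the energy bound $\norm{D^\ast x}\leq(\norm{f}\,\norm{x})^{1/2}$ reduces matters to a scalar inequality of the form $\norm{x}\leq c_1(\norm{f}\,\norm{x})^{1/2}+c_2\norm{f}$, where $c_1,c_2$ are built from $\WPgamma^{-1}$, $\delta^{-1}$ and $\norm{D}$. Solving this quadratic inequality for $\norm{x}$ and then using $\WPgamma\leq\WPgamma_0$, $\delta\leq\delta_0$ to replace the coefficients by the stated $s$-independent quantities produces $\norm{x}\leq M_R\,\WPgamma^{-2}\delta^{-2}\norm{f}$; since the same a priori estimate applied to $is-A^\ast$ forces $is\in\rho(A)$, this is exactly the claimed bound $\norm{R(is,A)}\leq M_R\WPgamma(s)^{-2}\delta(s)^{-2}$. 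I expect the only delicate point here to be the bookkeeping in this final step: tracking the worst-case powers of $\WPgamma^{-1}$, $\delta^{-1}$ and $\norm{D}$ so that the constant collapses to precisely the stated $M_R$. This is the ``trivial modification'' of \cite[Thm.~3.5]{ChiPau19arxiv} that renders the constant explicit.

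\emph{The value of $\kappa$.} By Theorem~\ref{T:1.1} it suffices to show $\sup_{\lambda\in\dC_+}\norm{R(\lambda,A)(-A)^{-\ceil{\alpha}}}\leq 2M_C$. I would first work on the imaginary axis. The bound just proved together with the growth hypothesis gives $\norm{R(is,A)}\leq M_RM_0(1+\abs{s}^\alpha)$. Writing $(-A)^{-1}=R(0,A)$ and iterating the resolvent identity $R(is,A)R(0,A)=(is)^{-1}(R(0,A)-R(is,A))$ yields, by induction on the exponent,
\eqn{\label{eq:teleaxis} \norm{R(is,A)(-A)^{-\ceil{\alpha}}}\leq\frac{\norm{R(is,A)}}{\abs{s}^{\ceil{\alpha}}}+\sum_{k=1}^{\ceil{\alpha}}\frac{\norm{A^{-1}}^{k}}{\abs{s}^{\ceil{\alpha}+1-k}}.}
For $\abs{s}\leq s_0$ I would instead use the crude bound $\norm{R(is,A)(-A)^{-\ceil{\alpha}}}\leq\norm{R(is,A)}\norm{A^{-1}}^{\ceil{\alpha}}\leq M_RM_0(1+s_0^\alpha)\norm{A^{-1}}^{\ceil{\alpha}}$, which is the first entry of the maximum defining $M_C$. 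For $\abs{s}>s_0$, the function $(1+\abs{s}^{\alpha})\abs{s}^{-\ceil{\alpha}}$ and each $\abs{s}^{-(\ceil{\alpha}+1-k)}$ are decreasing (here $\ceil{\alpha}\geq\alpha$ is used), so \eqref{eq:teleaxis} is dominated by the second entry. Hence $\sup_{s\in\R}\norm{R(is,A)(-A)^{-\ceil{\alpha}}}\leq M_C$.

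It remains to pass from the axis into $\dC_+$. Since $A$ is dissipative, as $\RE\iprod{Ax}{x}=-\norm{D^\ast x}^2\leq0$, it generates a bounded (contraction) semigroup, so $\dC_+\subset\rho(A)$ and $\norm{R(\lambda,A)}\leq(\RE\lambda)^{-1}$; moreover the telescoping bound in the form \eqref{eq:teleaxis} holds with $\abs{s}$ replaced by $\abs{\lambda}$ for every $\lambda\in\dC_+$. Using $\norm{R(\lambda,A)}\leq1$ on $\{\RE\lambda\geq1\}$, this telescoping bound for large $\abs{\lambda}$, and continuity on compacta, one checks that $\lambda\mapsto R(\lambda,A)(-A)^{-\ceil{\alpha}}$ is analytic and \emph{bounded} on $\overline{\dC_+}$; the scalar maximum modulus principle applied to $\lambda\mapsto\iprod{R(\lambda,A)(-A)^{-\ceil{\alpha}}x}{y}$ then propagates the boundary bound $M_C$ into the interior. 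This gives $\sup_{\lambda\in\dC_+}\norm{R(\lambda,A)(-A)^{-\ceil{\alpha}}}\leq M_C\leq 2M_C$, and the admissibility of any $\kappa<(2M_C)^{-1/2}$ follows from Theorem~\ref{T:1.1}. I expect this half-plane passage to be the step requiring the most care, since one must certify boundedness of the operator-valued function up to the boundary before the maximum principle can be invoked.
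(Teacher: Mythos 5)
Your overall strategy coincides with the paper's on both halves: the same energy identity $\norm{D^\ast x}^2=\re\iprod{y}{x}\leq\norm{y}\norm{x}$, the same spectral splitting with the gap bound $\delta(s)^{-1}$ on the complementary part and the damping bound $\WPgamma(s)^{-1}$ on $\ran P$, and, for the second half, the same split at $s_0$ together with the iterated resolvent identity $R(is,A)A^{-\ceil{\alpha}}=(is)^{-\ceil{\alpha}}R(is,A)+\sum_{k=1}^{\ceil{\alpha}}(is)^{k-1-\ceil{\alpha}}A^{-k}$, which reproduces exactly the two entries of the maximum defining $M_C$. One point, however, needs repair rather than deferred ``bookkeeping'': in the first half you combine the components via the triangle inequality $\norm{x}\leq\norm{Px}+\norm{Qx}$ and then solve a quadratic inequality. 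Since $P$ is an orthogonal spectral projection, the paper instead uses the Pythagorean identity $\norm{x}^2=\norm{Px}^2+\norm{Qx}^2$, squares the estimate $\norm{Px}\leq\WPgamma^{-1}(\norm{D^\ast x}+\norm{D}\norm{Qx})$, substitutes $\norm{D^\ast x}^2\leq\norm{x}\norm{y}$, and absorbs the cross term by Young's inequality ($2ab\leq\frac12\norm{x}^2+2(\cdot)^2\norm{y}^2$). The triangle-inequality route generically loses an extra factor (up to $\sqrt{2}$) relative to the orthogonal splitting, so as outlined your argument yields \emph{a} constant of the right form but will in general overshoot the specific $M_R$ asserted in the theorem; to land on the stated constant you should switch to the orthogonal decomposition and the Young-inequality absorption. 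Your remark that the a priori estimate applied also to $A^\ast=-A_0-DD^\ast$ (again of the form ``skew-adjoint minus $DD^\ast$'') yields $i\R\subset\rho(A)$ is a legitimate completion of a point the paper leaves implicit.

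On the passage from the imaginary axis to $\dC_+$ you genuinely diverge from the paper, which invokes the approach of \cite[Lem.~5.3]{BatChi16} to get $\sup_{\lambda\in\dC_+}\norm{R(\lambda,A)A^{-\ceil{\alpha}}}\leq 2\sup_{s\in\R}\norm{R(is,A)A^{-\ceil{\alpha}}}$ --- this factor $2$ is precisely why the admissible range is stated as $\kappa<1/\sqrt{2M_C}$. Your alternative is a maximum-modulus (Phragm\'en--Lindel\"of) argument: since $\overline{\dC_+}\subset\rho(A)$ (dissipativity of $A$ gives $\norm{R(\lambda,A)}\leq(\re\lambda)^{-1}$ on $\dC_+$, and the first half covers the axis), and since the scalar functions $\lambda\mapsto\iprod{R(\lambda,A)(-A)^{-\ceil{\alpha}}x}{y}$ are analytic and bounded on the closed half-plane --- boundedness does require the gluing you sketch, e.g.\ a Neumann-series step near the axis where $(\re\lambda)^{-1}$ is useless --- the boundary bound $M_C$ propagates inside without any factor $2$. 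This is valid and in fact sharper: it shows any $\kappa<1/\sqrt{M_C}$ is admissible, so the theorem's claim $\kappa<1/\sqrt{2M_C}$ follows a fortiori. What the paper's citation buys is brevity and no need to certify boundedness up to the boundary; what your argument buys is a constant improved by $\sqrt{2}$. In summary: the proposal is sound in structure and even improves the second half, but the first half as written does not deliver the stated $M_R$ until the triangle-inequality combination is replaced by the orthogonal one.
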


\begin{proof}
  Assume that the functions $\WPgamma$ and $\delta$ satisfy the assumptions of the theorem. Let $y\in X$ and $s\in\R$ be arbitrary and write $x=R(is,A)y\in \Dom A $. We then have $(is-A_0+DD^\ast)x=y$, and thus
  \eq{
    \norm{D^\ast x}^2 
    = \re \iprod{DD^\ast x}{x}
    = \re \iprod{(is-A_0+DD^\ast) x}{x}
    = \re \iprod{y}{x}
    \leq \norm{y}\norm{x}.
  }
Denote $P_s:=P_{(s-\delta(s),s+\delta(s))}$ for brevity and 
write $X=X_s\oplus_\perp X_\infty$ where $X_s=P_sX$ and $X_\infty = (I-P_s)X$. 
If we write $x=x_0+x_\infty$ and $y=y_0+y_\infty$ according to this decomposition, then
\eq{
  &(is-A_0)x_\infty +(I-P_s)DD^\ast x=y_\infty, \\
  \Leftrightarrow \quad & x_\infty 
  =(is-A_0)^{-1}\left[ y_\infty - (I-P_s)DD^\ast x\right],
}
since the restriction  $(is-A_0)\vert_{X_\infty}$ of $is-A_0$ to $X_\infty$ is boundedly invertible. Since $A_0$ is skew-adjoint and $\sigma((is-A_0)\vert_{X_\infty})\subset i\R\setminus (-i\delta(s),i\delta(s))$,
we have 
\eq{
  \norm{x_\infty}^2
&\leq \delta(s)^{-2} \norm{y_\infty - (I-P_s)DD^\ast x}^2\\
&\leq 2\delta(s)^{-2} \left(\norm{y}^2 + \norm{D}^2\norm{D^\ast x}^2\right).
}
By assumption we have $\norm{x_0}\leq \WPgamma(s)^{-1}\norm{D^\ast x_0}\leq \WPgamma(s)^{-1}(\norm{D^\ast x}+\norm{D^\ast x_\infty})$. If we denote $q(s)= 1+ 2\WPgamma(s)^{-2}\norm{D}^2$, we can use $\norm{D^\ast x}^2\leq \norm{x}\norm{y}$ and the Young's inequality to estimate
\eq{
  \norm{x}^2 
  &= \norm{x_0}^2 + \norm{x_\infty}^2 
  \leq 2\WPgamma(s)^{-2}(\norm{D^\ast x}^2 + \norm{D}^2 \norm{x_\infty}^2) + \norm{x_\infty}^2 \\
  &= 2\WPgamma(s)^{-2} \norm{D^\ast x}^2
  +q(s)  \norm{x_\infty}^2 \\
  &\leq 2\WPgamma(s)^{-2} \norm{D^\ast x}^2 +2\delta(s)^{-2} q(s)  \left(\norm{y}^2 + \norm{D}^2\norm{D^\ast x}^2\right)\\
  &\leq 2\delta(s)^{-2} q(s)  \norm{y}^2+
  2(\WPgamma(s)^{-2}  +\delta(s)^{-2} q(s)   \norm{D}^2) \norm{x}\norm{y}\\
  &\leq 2\delta(s)^{-2} q(s)  \norm{y}^2+\frac{1}{2}\norm{x}^2+
  2(\WPgamma(s)^{-2}  +\delta(s)^{-2} q(s)   \norm{D}^2)^2 \norm{y}^2.
}
This estimate implies
\eqn{
  \label{eq:kappaResxybound}
  \norm{x}^2\leq 4(\delta(s)^{-2} q(s)+(\WPgamma(s)^{-2}  +\delta(s)^{-2} q(s)   \norm{D}^2)^2)\norm{y}^2.
}
Recall that 
$1\leq \WPgamma_0^2\WPgamma(s)^{-2} $ and $1\leq \delta_0^2\delta(s)^{-2} $.
We have
\eq{
  q(s)
  &=1+2\WPgamma(s)^{-2}\norm{D}^2 
  \leq \WPgamma(s)^{-2}(\WPgamma_0^2+2\norm{D}^2 ) .
}
The estimate~\eqref{eq:kappaResxybound} implies
  \eq{
    \norm{R(is,A)}^2
    &\leq  4(\delta(s)^{-2} q(s)+(\WPgamma(s)^{-2}  +\delta(s)^{-2} q(s)   \norm{D}^2)^2)\\
    &\leq  4(\WPgamma_0^2\delta_0^2 (\WPgamma_0^2+2\norm{D}^2 )
    +(\delta_0^2  + (\WPgamma_0^2+2\norm{D}^2 )   \norm{D}^2)^2)\WPgamma(s)^{-4}\delta(s)^{-4}.
  }
This completes the first part of the proof.

Assume now that there exists $M_0>0$ such that $\WPgamma(s)^{-2}\delta(s)^{-2}\leq M_0(1+|s|^\alpha) $ for all $s\in\R$ and denote $n_\alpha = \ceil{\alpha}\in\N$. Then $\norm{R(is,A)}\leq M_0M_R(1+|s|^\alpha)$ for all $s\in\R$, and Theorem~\ref{T:1.1} implies that if $\gb,\gg\geq 0$ are such that
$\gb+\gg=n_\alpha$, then the constant $\kappa>0$ is required to satisfy $\kappa<(\sup_{\gl\in\mathbb{C}_+}\norm{R(\gl,A)A^{-n_\alpha}})^{-1/2}$.
The approach in the proof of~\cite[Lem.~5.3]{BatChi16} can be used to show that 
\eq{
 \sup_{\lambda\in\mathbb{C}_+}\norm{R(\lambda,A)A^{-n_\alpha}} \leq 2\cdot\sup_{s\in\mathbb{R}}\;\norm{R(is,A)A^{-n_\alpha}}.
}
This implies that $\kappa>0$ in Theorem~\ref{T:1.1} can be chosen 
to have any value $\kappa<1/ \sqrt{2M_C}$ provided that the constant $M_C>0$ in the statement of the theorem is such that $\norm{R(is,A)A^{-n_\alpha}}\leq M_C$ for all $s\in\R$.
In order to show this, let $s_0>0$ be arbitrary and fixed. For any $s\in\R$ with $\abs{s}\leq s_0$ we have
\eq{
  \norm{R(is,A)A^{-n_\alpha}}\leq M_RM_0\norm{A^{-1}}^{n_\alpha} (1+s_0^\alpha).
}
On the other hand, if $\abs{s}\geq s_0$, then
using the resolvent identity $R(is,A)A^{-1} = (is)^{-1} (R(is,A)+A^{-1})$ 
repeatedly shows that
\eq{
  \norm{R(is,A)A^{-n_\alpha}} 
& =\norm{ (is)^{-n_\alpha} R(is,A) + \sum_{k=1}^{n_\alpha}(is)^{k-1-n_\alpha}A^{-k}} \\
&\leq \frac{M_RM_0(1+|s|^{\alpha})}{|s|^{n_\alpha}}
+ \sum_{k=1}^{n_\alpha}|s|^{k-1-n_\alpha}\norm{A^{-1}}^k 
 \\
&\leq
\frac{M_RM_0(1+s_0^{\alpha})}{s_0^{n_\alpha}}
+ \sum_{k=1}^{n_\alpha}s_0^{k-1-n_\alpha}\norm{A^{-1}}^k .
}
Combining the above two estimates shows that $\sup_{s\in\R}\norm{R(is,A)A^{-\beta-\gamma}}\leq M_C$ 
for the constant $M_C>0$ in the statement of the theorem,
and thus the proof is complete.
\end{proof}

  \begin{remark}
    \label{rem:ABBLdiag}
    In the case where $-L$ has a complete set of orthonormal eigenvectors $-L\phi_n=\mu_n\phi_n$  with $0<\mu_1\leq \mu_2\leq \cdots$, the operator $A_0$ in~\eqref{eq:A0D0} has eigenvalues $\lambda_n = \sign(n)i\sqrt{\mu_n}$ and
    a complete set of orthonormal eigenvectors $\set{\psi_n}_{n\in\mathbb{Z}\setminus \set{0}}$ such that
    \eq{
      A_0 \psi_n = \lambda_n \psi_n, \qquad \psi_n = \frac{1}{\sqrt{2}\lambda_n}\pmat{\phi_{|n|}\\\lambda_n\phi_{|n|}}.
    }
    In this situation for every $s\in \R$ the spectral subspace $\ran P_{(s-\delta(s),s+\delta(s))}$  of $A_0$ consists of linear combinations of the eigenvectors $\psi_n$ with every $n\in\mathbb{Z}\setminus \set{0}$ for which $s-\delta(s)<\sign(n)\sqrt{\mu_n}<s+\delta(s)$.
    The functions 
    $\WPgamma:\mathbb{R} \to \WPgammaRan$
    and 
    $\delta:\mathbb{R}\to\deltaRan$
    in Theorem~\ref{thm:kappaestimate} should then be chosen so that 
 \eq{
   \norm{D_0^\ast x_2}\geq \WPgamma(s) \sqrt{\norm{(-L)^{1/2}x_1}_{X_0}^2+\norm{x_2}_{X_0}^2}
 }
 for all $s\in\R$ and $x=(x_1,x_2)^\top\in\ran P_{(s-\delta(s),s+\delta(s))}$.
 In particular, if $\delta:\R\to \deltaRan$ is chosen in such a way that $\delta(-s)=\delta(s)$ and every interval $(i(s-\delta(s)),i(s+\delta(s)))$ contains at most one eigenvalue $\gl_n = \sign(n)i\sqrt{\mu_n}$, 
 then $\WPgamma:\mathbb{R}\to \WPgammaRan$ in Theorem~\ref{thm:kappaestimate} can be chosen to be an even function satisfying 
 \eq{
\norm{D_0^\ast\phi_{|n|}}\geq \sqrt{2}\WPgamma(s) 
   \qquad \mbox{whenever} \quad 
|s-\sqrt{\mu_n}|<\delta(s), \quad s\geq0.
 }
\end{remark}

  \begin{remark}
    \label{rem:kappaestgen}
    The second part of the proof of Theorem~\ref{thm:kappaestimate} can be extended in a straightforward manner to the more general case where $\gb,\gg\geq 0$ are any exponents  satisfying $\gb+\gg\geq \alpha$.
    Indeed, if we denote $n_{\gb\gg}=\ceil{\gb+\gg}$,
    the moment inequality~\cite[Thm.~II.5.34]{EngNag00book} 
    with $\theta=(n_{\gb\gg}-\gb-\gg)/n_{\gb\gg}$ and a constant $M_{\gb,\gg}>0$
    can first be used estimate
    \eq{
      \norm{R(is,A)(-A)^{-\gb-\gg}}\leq M_{\gb,\gg}\norm{R(is,A)}^\theta \norm{R(is,A)A^{-n_{\gb\gg}}}^{1-\theta},
    }
    and  $\norm{R(is,A)A^{-n_{\gb\gg}}}$ can be estimated using the resolvent identity similarly as before.
However, in this case the constant $M_C$ in the bound for $\kappa>0$ has a more complicated formula.
  \end{remark}

\end{subsection}

\begin{subsection}{Robustness results for wave equations}

The structure of the operator $A$ allows us to improve the assumptions of Theorem \ref{T:1.1} to overcome the difficulty of computing the graph norms of the fractional powers of the damped generators $-(A_0-DD^\ast)$ and $-(A_0-DD^\ast)^*$.
Instead, the conditions are given in terms of the graph norms of the fractional powers of the positive operator $-L$.
Throughout this section $C_1^\ast$ denotes the adjoint of $C_1$ as an operator $C_1\colon \Dom C_1\subset X_0\to Y$.

\begin{theorem}\label{T:1.2}
  Assume that the strongly continuous semigroup $T(t)$ generated by 
  \eq{
    A = \pmat{0&I\\L&-D_0D_0^\ast}: \dom L\times \dom (-L)^{1/2}\subset \cH\to \cH
  }
  is polynomially stable with $\alpha\leq 2$, that  $0\leq\beta,\gamma\leq 1$ are such that $\beta+\gamma\geq\alpha$, and that $\kappa>0$ is as in Theorem~\textup{\ref{T:1.1}}. If the perturbation operators $B=(0,B_2)^\top\in\sL(Y,\cH)$ and $C=(C_1,C_2)\in\sL(\cH,Y)$ satisfy
   \begin{equation*}
     \ran B_2\subset\dom(-L)^{\beta/2},\quad \ran C^*_1\subset\dom(-L)^\frac{\gamma-1}{2},\quad \ran C^*_2\subset\dom(-L)^{\gamma/2}
   \end{equation*}
   if $(-L)^{\beta/2} B_2$, $(-L)^\frac{\gamma-1}{2} C^*_1$, and $(-L)^{\gamma/2} C^*_2$ are Hilbert--Schmidt operators, and if
    \begin{equation}\label{E:gen}
        \begin{split}
    &\left\|(-L)^{\beta/2}B_2\right\|<\frac{\kappa}{K_\beta},\\
   &\left\|(-L)^\frac{\gamma-1}{2}C^*_1\right\|^2
    +\left\|(-L)^{\gamma/2}C^*_2\right\|^2<\frac{\kappa^2}{K_\gamma^2},
    \end{split}
    \end{equation}
  then the semigroup generated by $A+BC$ is polynomially stable with the same $\alpha$. Here  $K_\t = e^{\frac{1}{2}{\pi^2\t(1-\t)}} M^\t$ with $\t\in[0,1]$ and $M=1+\|D_0\|^2\|(-L)^{-1/2}\|$.
    \end{theorem}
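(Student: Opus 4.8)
The plan is to deduce Theorem~\ref{T:1.2} from Theorem~\ref{T:1.1} by trading the fractional powers of the damped generator $-A$ and its adjoint for fractional powers of the positive operator $-L$ acting on the two components of $\cH$. The bridge is the positive self-adjoint operator $\abs{A_0}=(-A_0^2)^{1/2}$. Since $A_0^2=\pmat{L&0\\0&L}$ we have $-A_0^2=\pmat{-L&0\\0&-L}$, which is positive and self-adjoint on the weighted space $\cH$, so for every $\theta\in[0,1]$ its powers act diagonally, $\abs{A_0}^\theta=\pmat{(-L)^{\theta/2}&0\\0&(-L)^{\theta/2}}$. This explicit diagonal form is what makes the hypotheses of Theorem~\ref{T:1.1} computable in terms of $-L$.

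First I would record the two elementary reductions that turn the hypotheses on $B_2,C_1,C_2$ into hypotheses on $B,C$ relative to $\abs{A_0}$. For $B=(0,B_2)^\top$ the diagonal form gives $\ran B\subset\dom\abs{A_0}^\beta$ exactly when $\ran B_2\subset\dom(-L)^{\beta/2}$, and then $\norm{\abs{A_0}^\beta B y}_\cH=\norm{(-L)^{\beta/2}B_2 y}_{X_0}$, so $\abs{A_0}^\beta B$ and $(-L)^{\beta/2}B_2$ share the same operator and Hilbert--Schmidt norms. For $C$ I would first compute the $\cH$-adjoint from the weighted inner product, obtaining $C^\ast y=\pmat{(-L)^{-1}C_1^\ast y\\ C_2^\ast y}$ with $C_1^\ast,C_2^\ast$ the $X_0$-adjoints; hence $\abs{A_0}^\gamma C^\ast y=\pmat{(-L)^{(\gamma-2)/2}C_1^\ast y\\(-L)^{\gamma/2}C_2^\ast y}$ and $\norm{\abs{A_0}^\gamma C^\ast y}_\cH^2=\norm{(-L)^{(\gamma-1)/2}C_1^\ast y}^2+\norm{(-L)^{\gamma/2}C_2^\ast y}^2$. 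This identifies $\ran C^\ast\subset\dom\abs{A_0}^\gamma$ with the two stated range conditions and shows that the Hilbert--Schmidt norm of $\abs{A_0}^\gamma C^\ast$ is the square root of the sum appearing in~\eqref{E:gen}.

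The analytic core is a comparison of $(-A)^\theta$, and symmetrically $(-A^\ast)^\theta$, with $\abs{A_0}^\theta$: the goal is to show that for $\theta\in[0,1]$ one has $\dom\abs{A_0}^\theta\subset\dom(-A)^\theta$ together with $\norm{(-A)^\theta x}_\cH\le K_\theta\norm{\abs{A_0}^\theta x}_\cH$. Both $-A=-A_0+DD^\ast$ and $-A^\ast=A_0+DD^\ast$ are $m$-accretive on $\cH$, because $A_0$ is skew-adjoint and $\re\iprod{DD^\ast x}{x}=\norm{D^\ast x}^2\ge0$. I would exploit this accretivity, together with the unitarity of $\abs{A_0}^{is}$, to run a complex-interpolation argument (the three-lines theorem, in the form of a Phragm\'en--Lindel\"of estimate on the strip $0\le\re z\le1$) for the holomorphic family $G(z)=(-A)^z\abs{A_0}^{-z}$. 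Its endpoints are controlled by $\norm{G(0)}=1$ and $\norm{G(1)}=\norm{(-A)\abs{A_0}^{-1}}\le 1+\norm{D_0}^2\norm{(-L)^{-1/2}}=M$, using $\norm{A_0\abs{A_0}^{-1}}=1$ and the diagonal form of $\abs{A_0}^{-1}$; the exponential growth $e^{\frac{\pi}{2}\abs{s}}$ of the imaginary powers $(-A)^{is}$ along the boundary is what converts these endpoint bounds into the stated constant $K_\theta=e^{\frac12\pi^2\theta(1-\theta)}M^\theta$.

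With the comparison in hand the proof concludes quickly. Applying it with $\theta=\beta$ to $x=By$ yields $\ran B\subset\dom(-A)^\beta$ and $\norm{(-A)^\beta B}\le K_\beta\norm{(-L)^{\beta/2}B_2}<\kappa$ by the first inequality in~\eqref{E:gen}; factoring $(-A)^\beta B=[(-A)^\beta\abs{A_0}^{-\beta}]\,[\abs{A_0}^\beta B]$ as a bounded operator times a Hilbert--Schmidt operator shows $(-A)^\beta B$ is Hilbert--Schmidt. The same argument with $\theta=\gamma$ applied to $-A^\ast$ and $x=C^\ast y$ gives $\ran C^\ast\subset\dom(-A^\ast)^\gamma$, the Hilbert--Schmidt property of $(-A^\ast)^\gamma C^\ast$, and $\norm{(-A^\ast)^\gamma C^\ast}\le K_\gamma\norm{\abs{A_0}^\gamma C^\ast}<\kappa$ by the second inequality in~\eqref{E:gen}. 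All hypotheses of Theorem~\ref{T:1.1} are then verified, and it yields the polynomial stability of the semigroup generated by $A+BC$ with the same $\alpha$. I expect the delicate point to be the comparison lemma itself: justifying the domain inclusion and the factorization of fractional powers, controlling the imaginary powers of the $m$-accretive operators in this structured setting, and above all tracking the exact interpolation constant $e^{\frac12\pi^2\theta(1-\theta)}$ through the strip argument (a Balakrishnan integral representation of $(-A)^\theta$ is a natural alternative route to the same constant). By contrast, the reductions of the first two paragraphs are routine once the diagonal form of $\abs{A_0}^\theta$ and the weighted adjoint $C^\ast$ are in place.
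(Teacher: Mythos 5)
Your proposal is correct and takes essentially the same route as the paper: your comparison operator $\abs{A_0}=(-A_0^2)^{1/2}$ is exactly the paper's diagonal operator $A_d$, your adjoint computation $C^\ast y=((-L)^{-1}C_1^\ast y,\,C_2^\ast y)^\top$ and endpoint bound $\norm{(-A)\abs{A_0}^{-1}}\leq 1+\norm{D_0}^2\norm{(-L)^{-1/2}}=M$ match the paper's estimates, and the conclusion is likewise obtained by feeding the resulting norm bounds and the Hilbert--Schmidt factorization into Theorem~\ref{T:1.1}. The only divergence is that the comparison lemma you propose to establish by a three-lines argument, with constant $K_\theta=e^{\frac{1}{2}\pi^2\theta(1-\theta)}M^\theta$, is precisely Kato's generalization of the Heinz inequality, which the paper invokes directly as Theorem~\ref{C:1} from \cite{Kat61} rather than re-deriving.
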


    For proving this result we use the following theorem from~\cite{Kat61}.
\begin{theorem}[{\cite[Thm. 1]{Kat61}}]\label{C:1}
  If $\mathcal{A}_1$, $\mathcal{A}_2$ are closed maximal accretive operators on a Hilbert space $\sH$ such that $\dom \mathcal{A}_1 \subset \dom \mathcal{A}_2$ and $\|\mathcal{A}_2 u\|\leq M\|\mathcal{A}_1 u\|$ for some constant $M>0$ and for all  $u\in\dom \mathcal{A}_1$, then $\dom \mathcal{A}_1^\t  \subset\dom \mathcal{A}_2^\t  $ and
  \begin{equation*}
    \left\|\mathcal{A}_2^\t  u\right\|\leq K_\t \left\|\mathcal{A}_1^\t  u\right\|, \quad u\in\dom \mathcal{A}_1^\t , \quad 0\leq \t \leq 1,
  \end{equation*}
  where 
  $K_\t = e^{\frac{1}{2}{\pi^2\t (1-\t )}} M^\t$.
\end{theorem}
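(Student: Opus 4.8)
The plan is to deduce Theorem~\ref{T:1.2} from Theorem~\ref{T:1.1} by verifying the latter's hypotheses, using Kato's inequality (Theorem~\ref{C:1}) as the bridge that trades fractional powers of the \emph{damped} generators $-A$, $-A^\ast$ for fractional powers of the \emph{undamped} operator $|A_0| = (-A_0^2)^{1/2}$. The decisive observation is that, since $A_0^2 = \diag(L,L)$, we have $|A_0| = \diag((-L)^{1/2},(-L)^{1/2})$, so fractional powers of $|A_0|$ are diagonal and act componentwise through $(-L)^{\theta/2}$. Moreover $-A = -A_0 + DD^\ast$ and $-A^\ast = A_0 + DD^\ast$ are both maximal accretive: $A$ and $A^\ast$ generate contraction semigroups by Lumer--Phillips, so their negatives are maximal accretive, while $|A_0|$ is self-adjoint and nonnegative, hence also maximal accretive.

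First I would establish the relative bound needed to invoke Kato. Since $\dom |A_0| = \dom A_0 = \dom L\times\dom(-L)^{1/2} = \dom A$, the domain inclusion is automatic. For $u = (u_1,u_2)^\top\in\dom|A_0|$, using that $A_0$ is skew-adjoint (so $\|A_0 u\|_\cH = \||A_0| u\|_\cH$) and that $\|DD^\ast u\|_\cH = \|D_0 D_0^\ast u_2\|_{X_0}\le \|D_0\|^2\|(-L)^{-1/2}\|\,\|(-L)^{1/2}u_2\|_{X_0}\le \|D_0\|^2\|(-L)^{-1/2}\|\,\||A_0|u\|_\cH$, the triangle inequality yields $\|(-A)u\|_\cH\le M\||A_0|u\|_\cH$ with exactly $M = 1+\|D_0\|^2\|(-L)^{-1/2}\|$; the identical estimate holds for $-A^\ast$. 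Kato's Theorem~\ref{C:1} then gives, for every $\theta\in[0,1]$, the inclusions $\dom|A_0|^\theta\subset\dom(-A)^\theta$ and $\dom|A_0|^\theta\subset\dom(-A^\ast)^\theta$, together with the bounds $\|(-A)^\theta u\|\le K_\theta\||A_0|^\theta u\|$ and $\|(-A^\ast)^\theta u\|\le K_\theta\||A_0|^\theta u\|$, where $K_\theta = e^{\pi^2\theta(1-\theta)/2}M^\theta$. Because $-L$ is boundedly invertible, $|A_0|^{-\theta}$ is bounded, so these also say that $(-A)^\theta|A_0|^{-\theta}$ and $(-A^\ast)^\theta|A_0|^{-\theta}$ are bounded with norm at most $K_\theta$.

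Next I would translate the $\cH$-norms into the stated $L$-conditions, which is where the weighted inner product on $\cH$ does the real work. For $B = (0,B_2)^\top$ one computes $|A_0|^\beta B y = (0,(-L)^{\beta/2}B_2 y)^\top$, whence $\||A_0|^\beta B\| = \|(-L)^{\beta/2}B_2\|$ and the Hilbert--Schmidt norms agree, and the range condition $\ran B_2\subset\dom(-L)^{\beta/2}$ gives $\ran B\subset\dom|A_0|^\beta\subset\dom(-A)^\beta$. For the observation operator I would first compute the adjoint of $C = (C_1,C_2)$ relative to the $\cH$ inner product, obtaining $C^\ast y = ((-L)^{-1}C_1^\ast y,\,C_2^\ast y)^\top$, the factor $(-L)^{-1}$ arising from the $(-L)^{1/2}$-weighting of the first component. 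Then $|A_0|^\gamma C^\ast y = ((-L)^{(\gamma-2)/2}C_1^\ast y,\,(-L)^{\gamma/2}C_2^\ast y)^\top$, so that $\||A_0|^\gamma C^\ast\|^2\le \|(-L)^{(\gamma-1)/2}C_1^\ast\|^2 + \|(-L)^{\gamma/2}C_2^\ast\|^2$ and the corresponding Hilbert--Schmidt norm splits the same way; this is exactly the origin of the shifted exponent $(\gamma-1)/2$ on $C_1^\ast$.

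Finally, combining the Kato bounds with these identities gives $\|(-A)^\beta B\|\le K_\beta\|(-L)^{\beta/2}B_2\| < \kappa$ and $\|(-A^\ast)^\gamma C^\ast\|\le K_\gamma(\|(-L)^{(\gamma-1)/2}C_1^\ast\|^2 + \|(-L)^{\gamma/2}C_2^\ast\|^2)^{1/2} < \kappa$ by the hypotheses~\eqref{E:gen}, while factoring $(-A)^\beta B = [(-A)^\beta|A_0|^{-\beta}][|A_0|^\beta B]$ (bounded times Hilbert--Schmidt) and similarly for $(-A^\ast)^\gamma C^\ast$ shows both are Hilbert--Schmidt. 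All hypotheses of Theorem~\ref{T:1.1} are then met, and it yields polynomial stability of the semigroup generated by $A+BC$ with the same $\alpha$. I expect the main obstacle to be bookkeeping rather than conceptual: correctly deriving $C^\ast$ and the componentwise $\cH$-norms of $|A_0|^\theta B$ and $|A_0|^\theta C^\ast$ under the weighted inner product, since a slip there would misplace the fractional exponents. Verifying maximal accretivity and pinning down the precise constant $M$ in the relative bound $\|(-A)u\|\le M\||A_0|u\|$ is the only other point requiring care.
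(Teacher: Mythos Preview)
Your proposal does not address the stated result. Theorem~\ref{C:1} is Kato's Heinz-type inequality for fractional powers of maximal accretive operators; in the paper it is a \emph{cited} theorem from~\cite{Kat61} and is stated without proof. What you have written is a proof of Theorem~\ref{T:1.2}, the robustness result that \emph{uses} Theorem~\ref{C:1} as a tool. You even say so explicitly in your first sentence. A proof of Theorem~\ref{C:1} would need to establish the interpolation inequality $\|\mathcal{A}_2^\theta u\|\le K_\theta\|\mathcal{A}_1^\theta u\|$ for arbitrary maximal accretive $\mathcal{A}_1,\mathcal{A}_2$ under a relative bound, and nothing in your write-up does that.

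That said, if one reads your proposal as a proof of Theorem~\ref{T:1.2}, it is correct and follows essentially the same route as the paper: introduce the diagonal operator $|A_0|=\diag((-L)^{1/2},(-L)^{1/2})$ (the paper calls it $A_d$), establish the relative bound $\|(-A)u\|_\cH\le M\||A_0|u\|_\cH$ with $M=1+\|D_0\|^2\|(-L)^{-1/2}\|$, apply Kato's inequality to pass from $|A_0|^\theta$ to $(-A)^\theta$ and $(-A^\ast)^\theta$, compute $C^\ast=((-L)^{-1}C_1^\ast,C_2^\ast)^\top$ via the weighted $\cH$-inner product, and then invoke Theorem~\ref{T:1.1}. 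Your treatment of the Hilbert--Schmidt condition via the factorisation $(-A)^\beta B=[(-A)^\beta|A_0|^{-\beta}][|A_0|^\beta B]$ is slightly more explicit than the paper's, but the argument is the same.
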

\begin{proof}[Proof of Theorem~\textup{\ref{T:1.2}}]
  Let $0\leq \beta,\gamma\leq 1$ be such that $\beta+\gamma\geq \alpha$.
  Our aim is to show that if $B_2$, $C_1$, and $C_2$ satisfy the given assumptions, then
  $B=(0,B_2)^\top$ and $C=(C_1,C_2)$ satisfy~\eqref{E:L} with the same $\kappa>0$. The stability of the semigroup generated by  $A+BC$ then follows directly from Theorem~\ref{T:1.1}.
To this end let $\kappa>0$ be as in Theorem~\ref{T:1.1} and suppose that~\eqref{E:gen} hold.
Define $A_d: \Dom A_d\subset \cH \to \cH$ and $A_0: \Dom A_0\subset \cH \to \cH$ with domains $\Dom A_d = \Dom A_0 = \Dom A = \Dom L\times \Dom (-L)^{1/2}$ by
\begin{align*}
  A_d:=\begin{pmatrix}
    (-L)^{1/2}&0\\
    0&(-L)^{1/2}
  \end{pmatrix} \qquad \mbox{and} \qquad
  A_0:=\begin{pmatrix}
    0&I\\
    L& 0 \end{pmatrix},
\end{align*}
and let $D=(0,D_0)^\top\in \Lin(U,\cH)$.
Clearly
\begin{align*}
  \Dom A_d^\theta = \Dom(-L)^{(\theta+1)/2}\times \Dom(-L)^{\theta/2}
\end{align*}
for all $0\leq \theta\leq 1$.
Since $C\in \Lin(\cH,Y)$ and $C^\ast = (-L^{-1} C_1^\ast,C_2^\ast)^\top$,
the assumptions on $B_2$, $C_1$, and $C_2$ imply that $\ran B\subset \Dom A_d^\beta$ and $\ran C^\ast \subset \Dom A_d^\gamma$.
For every $u=(u_1,u_2)^\top\in \Dom A_d $ we have
$\norm{A_0u}_\cH^2 = \norm{(-L)^{1/2}u_2}^2+\norm{Lu_1}^2 = \norm{A_d u}_\cH^2$, and
  \begin{equation*}
  \begin{split}
    \|Au\|_\cH&=\left\|(A_0-DD^*)u\right\|_\cH\leq\left\|I-DD^*A_0^{-1}\right\|\|A_0u\|_\cH\\
    &\leq \left(1+\left\|DD^*A_0^{-1}\right\|\right)\|A_du\|_\cH,
  \end{split}
 \end{equation*}
 where $\| DD^\ast A_0^{-1}\| \leq \norm{D_0}^2 \norm{(-L)^{-1/2}} $.
 An analogous argument shows that we have $\norm{A^\ast u}_\cH = \norm{(A_0+DD^\ast) u}\leq (1+\norm{D_0}^2 \norm{(-L)^{-1/2}})\norm{A_d u}_\cH$ for all $u\in \Dom A_d$.
Since $-A$, $-A^\ast$ and $A_d$ are closed and maximally accretive operators and $\Dom A = \Dom A^\ast = \Dom A_d$, Theorem~\ref{C:1} implies that
$\ran B\subset \Dom(-A)^\beta$ and $\ran C^\ast \subset \Dom(-A^\ast)^\gamma$, and
for all $y\in Y$
 with $\norm{y}=1$
 we have
 \begin{align*}
   \norm{(-A)^\beta B y}_\cH
   &\leq K_\beta \norm{A_d^\beta By}_\cH
   \leq K_\beta \norm{(-L)^{\beta/2} B_2 }\norm{y} <\kappa\\
   \norm{(-A^\ast)^\gamma C^\ast y}_\cH
   &\leq K_\gamma \norm{A_d^\gamma C^\ast y}_\cH\\
   &\leq K_\gamma \left(\norm{(-L)^{(\gamma-1)/2} C_1^\ast }^2 + \norm{(-L)^{\gamma/2} C_2^\ast }^2\right)^{1/2} \norm{y}
   <\kappa.
 \end{align*}
 By Theorem~\ref{T:1.1} the semigroup generated by $A+BC$ is polynomially stable with $\alpha$.
\end{proof}

If the operator $L$ is diagonalizable~\cite[Sec.~2.6]{TucWei09book}, then for $\t\in \mathbb{R}$ the spaces $\Dom(-L)^\t$ and the graph norms of $(-L)^\t$ have the forms
\begin{subequations}
  \label{E:Hspace}
  \begin{align}
    H_{\t}(L)&:= \Dom(-L)^\t= \left\{u\in X_0:\sum\limits_{k=1}^\infty\mu_{k}^{2\t}|\langle u,\phi_{k}\rangle_{X_0}|^2<\infty\right\}\\
    \|u\|_{H_\t}&:= \norm{(-L)^\t u}_{X_0}=\sum\limits_{k=1}^\infty\mu_{k}^{2\t}|\langle u,\phi_{k}\rangle_{X_0}|^2, \qquad u\in H_\t(L),
  \end{align}
\end{subequations}
where $\mu_{k}$ are the eigenvalues of $-L$ and
$\phi_{k}$ are the corresponding orthonormal eigenvectors.
With these definitions the space $H_{-\t}(L)$ is the dual of $H_\t(L)$ with respect to the pivot space $X_0$~\cite[Sec. 2.9]{TucWei09book}.

\begin{corollary}\label{C:mainth}
  Assume that $L$ is diagonalisable, that the strongly continuous semigroup $T(t)$ generated by $A$ is polynomially stable with $\alpha\leq 2$, $0\leq\beta,\gamma\leq 1$ satisfy $\beta+\gamma\geq\alpha$, and $\kappa>0$ is as in Theorem~\textup{\ref{T:1.1}}. If the perturbation operators $B=(0,B_2)^\top\in\sL(Y,\cH)$ and $C=(C_1,C_2)\in\sL(\cH,Y)$ satisfy
\begin{align*}
  B_2\in \Lin(Y,H_{\beta/2}(L)), \qquad C_1^\ast \in \Lin(Y,H_{(\gamma-1)/2}(L)), \quad \mbox{and} \quad C_2^\ast \in \Lin(Y,H_{\gamma/2}(L)),
\end{align*}
 if $(-L)^{\beta/2} B_2$, $(-L)^\frac{\gamma-1}{2} C^*_1$, and $(-L)^{\gamma/2} C^*_2$ 
are Hilbert--Schmidt operators and if
    \begin{equation*}
        \begin{split}
	  &\left\|B_2\right\|_{\Lin(Y,H_{\beta/2})}<\frac{\kappa}{K_\beta},\\
	  &\left\|C^*_1\right\|^2_{\Lin(Y,H_{(\gamma-1)/2})}
	  +\left\|C^*_2\right\|^2_{\Lin(Y,H_{\gamma/2})}<\frac{\kappa^2}{K_\gamma^2},
    \end{split}
    \end{equation*}
  then the semigroup generated by $A+BC$ is polynomially stable with the same $\alpha$. Here $K_\t = e^{\frac{1}{2}{\pi^2\t(1-\t)}} M^\t$ with $\t\in[0,1]$ and $M=1+\|D_0\|^2\|(-L)^{-1/2}\|$.
\end{corollary}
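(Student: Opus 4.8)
The plan is to show that the Corollary is merely Theorem~\ref{T:1.2} restated in the diagonalizable setting, so that its conclusion follows by a direct appeal to that theorem. The entire argument hinges on the characterization~\eqref{E:Hspace}: for diagonalizable $L$ and any $\theta\in\mathbb{R}$ the space $H_\theta(L)$ coincides with $\dom(-L)^\theta$ and carries the norm $\|u\|_{H_\theta}=\|(-L)^\theta u\|_{X_0}$. In particular $(-L)^\theta$ is an isometric isomorphism of $H_\theta(L)$ onto $X_0$ (isometric by the very definition of the norm, and surjective because $(-L)^{-\theta}$ maps $X_0$ back into $H_\theta(L)$, which is well defined since $-L$ has a bounded inverse). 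This covers both signs of the exponent, in particular the possibly negative value $(\gamma-1)/2$ for which $H_{(\gamma-1)/2}(L)$ is the dual of $H_{(1-\gamma)/2}(L)$ relative to the pivot space $X_0$.

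First I would translate the three mapping hypotheses. By the observation above, $B_2\in\Lin(Y,H_{\beta/2}(L))$ is exactly the requirement that $\ran B_2\subset\dom(-L)^{\beta/2}$ together with boundedness of $(-L)^{\beta/2}B_2\colon Y\to X_0$; moreover composing $B_2$ with the isometry $(-L)^{\beta/2}$ gives
\[
  \|B_2\|_{\Lin(Y,H_{\beta/2})}=\sup_{\|y\|=1}\|(-L)^{\beta/2}B_2 y\|_{X_0}=\|(-L)^{\beta/2}B_2\|.
\]
The identical reasoning applied with $\theta=(\gamma-1)/2$ to $C_1^\ast$ and with $\theta=\gamma/2$ to $C_2^\ast$ turns the assumptions $C_1^\ast\in\Lin(Y,H_{(\gamma-1)/2}(L))$ and $C_2^\ast\in\Lin(Y,H_{\gamma/2}(L))$ into the range conditions of Theorem~\ref{T:1.2}, and yields the norm identities $\|C_1^\ast\|_{\Lin(Y,H_{(\gamma-1)/2})}=\|(-L)^{(\gamma-1)/2}C_1^\ast\|$ and $\|C_2^\ast\|_{\Lin(Y,H_{\gamma/2})}=\|(-L)^{\gamma/2}C_2^\ast\|$.

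With these identifications the Hilbert--Schmidt hypotheses of the Corollary are verbatim those of Theorem~\ref{T:1.2}, and the two smallness conditions on the $H_\theta$-norms become precisely the inequalities~\eqref{E:gen} for $\|(-L)^{\beta/2}B_2\|$, $\|(-L)^{(\gamma-1)/2}C_1^\ast\|$, and $\|(-L)^{\gamma/2}C_2^\ast\|$. Hence all hypotheses of Theorem~\ref{T:1.2} are met with the same $\alpha,\beta,\gamma,\kappa$, and the polynomial stability of the semigroup generated by $A+BC$ follows at once. I expect no genuine obstacle here: the only point needing a line of justification is the norm identity $\|B_2\|_{\Lin(Y,H_{\beta/2})}=\|(-L)^{\beta/2}B_2\|$ and its two analogues, which rest entirely on $(-L)^\theta$ being an isometry from $H_\theta(L)$ onto $X_0$.
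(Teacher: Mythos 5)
Your proposal is correct and matches the paper's intent exactly: the paper states Corollary~\ref{C:mainth} without a separate proof precisely because, once $(-L)^\theta$ is recognized as an isometric isomorphism from $H_\theta(L)$ onto $X_0$ (so that the $\Lin(Y,H_\theta)$-norms coincide with the graph norms $\|(-L)^\theta\,\cdot\,\|$), the hypotheses are a verbatim restatement of those of Theorem~\ref{T:1.2}. Your one-line justification of the norm identities is the only content needed, and it is sound, including the negative-exponent case $\theta=(\gamma-1)/2$ handled via the dual-space interpretation of $H_{-\theta}(L)$.
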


\end{subsection}

 \section{Perturbations of damped two-dimensional wave equations}
 \label{sec:2Dwave}

In this section we consider damped wave equations on rectangular domains with different damping functions. We use Theorem~\ref{T:1.2} to derive concrete conditions for preservation the polynomial stability of perturbed wave equations with finite rank and Hilbert-Schmidt perturbations.
We consider the damped wave equation~\eqref{E:DWE} on $\Omega = (0,a)\times(0,b)$, $a,b>0$, with a damping coefficient $d(\cdot,\cdot)\in L^\infty(\Omega)$.
The equation is of the form~\eqref{GWE} on $X_0=L^2(\Omega)$ with the choice 
  $L=\Delta$ and domain $\dom\Delta=H^2(\Omega)\cap H_0^1(\Omega)$, and with $D_0\in \Lin(L^2(\Omega))$ defined 
  as the multiplication operator such that $D_0u=\sqrt {d(\cdot,\cdot)}u(\cdot,\cdot)$ for all $u\in L^2(\Omega)$.

We suppose that the set $\omega=\{d(x,y)>0\}$ contains an open, nonempty subset and does not satisfy Geometric Control Condition (GCC) (see a definition of GCC for example in \cite[Sec.~1]{AnaLea14}). It was shown in \cite{Jaf90} that for such damping the Schr\"odinger group is observable, i.e., the pair $(D_0^*, i(-\Delta))$ is exactly observable~\cite[Def.~6.1.1]{TucWei09book} (see also~\cite{BurZwo19}). 
In this case
the damped wave equation \eqref{E:DWE} is polynomially stable with $\alpha= 2$ by~\cite[Thm.~2.3]{AnaLea14}.

  Our assumptions together with the results in~\cite{Jaf90} and~\cite[Prop.~3.9]{ChiPau19arxiv} also imply that the condition~\eqref{eq:WPestimate} is satisfied for some functions   
 $\WPgamma :\R\to(0,\WPgamma_0] $ and $\delta :\mathbb{R}\to (0,\delta_0]$
 satisfying
$\WPgamma(s)^{-2}\delta(s)^{-2}\leq M_0(1+s^2) $ for all $s\in\R$.
Because of this, Theorem~\ref{thm:kappaestimate} could in principle be used to derive numerical values for $\kappa>0$ for particular damping functions $d(\cdot,\cdot)$.  In practice, however, finding suitable concrete functions $\WPgamma$ and $\delta$ can be challenging,
and 
in the case of the two-dimensional wave equation this is an important topic for further research.

 \begin{remark}\label{strip}
   In some cases of damping functions the estimate for the exponent of polynomial stability can be improved. For example, in \cite{Sta17}  the exponent of polynomial stability for the damping function
  \begin{equation}
    \label{eq:stripdamping}
  d(x,y)=
  \left\{
  \begin{array}{ll}
    1    & \mbox{if $x< \e$};\\
    0         & \mbox{if $x> \e$},
  \end{array}\right.\quad \e\in(0,1)
\end{equation}
was shown to be $\a=3/2$. Moreover, additional differentiability assumptions on $d(\cdot,\cdot)$ improve the rate of polynomial decay, as shown in~\cite{BurHit07,AnaLea14,DatKle20}.
\end{remark}
\begin{subsection}{Rank one perturbations}
 We begin by considering perturbed wave equations of the form

    \begin{equation}\label{E:pW}
    \begin{split}
      &w_{tt}(t,x,y)-\Delta w(t,x,y)+d(x,y)w_t(t,x,y)\\
      &=b_2(x,y)\int\limits_\Omega (w(t,\xi,\eta) c_1(\xi,\eta)+ w_t(t,\xi,\eta)c_2(\xi,\eta))d\xi d\eta
      \end{split}
    \end{equation}
    with $b_{2},c_{2}\in L^2(\Omega)$ and $c_{1}\in H_{-1/2}(\Delta)$.
The following theorem presents sufficient conditions for the polynomial stability of~\eqref{E:pW}.

\begin{theorem}\label{T:rank1}
Assume that damped wave equation \eqref{E:DWE} is polynomially stable with $\a\leq 2$, $0\leq\beta,\gamma\leq 1$ are such that $\beta+\gamma\geq\a$,
     and $\kappa>0$ is as in Theorem~\textup{\ref{T:1.2}}.
If $b_2\in H_{\beta/2}(\Delta)$, $c_1\in H_{{(\gamma-1)/2}}(\Delta)$, $c_2\in H_{\gamma/2}(\Delta)$
  satisfy
   \begin{equation}\label{E:rank2}
    \|b_2\|_{H_{\beta/2}}<\frac{\kappa}{K_\beta},\quad \|c_1\|^2_{H_{(\gamma-1)/2}}+\|c_2\|^2_{H_{\gamma/2}}<\frac{\kappa^2}{K_\gamma^2},
  \end{equation}
then perturbed wave equation \eqref{E:pW} is polynomially stable with the same $\a$. 
Here $K_\theta = e^{\frac{1}{2}{\pi^2\theta(1-\theta)}} M^\theta$ with $M = 1+\frac{ab \|d\|_{L^\infty}}{\pi\sqrt{a^2+b^2}}$ and $\t\in[0,1]$.  
For such perturbations there exists $\Mpert>0$ such that the solutions of~\eqref{E:pW} corresponding to initial conditions $w_0\in H^2(\Omega)\cap H_0^1(\Omega)$ and $w_1\in H_0^1(\Omega)$ satisfy
\begin{align*}
  \norm{w(t,\cdot,\cdot)}_{H^1}^2 + \norm{w_t(t,\cdot,\cdot)}_{L^2}^2\leq \frac{\Mpert}{t^{2/\a}}\left( \norm{w_0}_{H^2}^2 + \norm{w_1}_{H^1}^2 \right), \qquad t>0.
\end{align*}
\end{theorem}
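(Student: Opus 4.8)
The plan is to recognize that Theorem~\ref{T:rank1} is essentially a concrete realization of Corollary~\ref{C:mainth} (equivalently Theorem~\ref{T:1.2}) for the specific damped wave equation~\eqref{E:DWE}, so the bulk of the work is identifying the abstract data and checking that the hypotheses translate correctly. First I would set $X_0 = L^2(\Omega)$, $L = \Delta$ with $\dom\Delta = H^2(\Omega)\cap H_0^1(\Omega)$, and $D_0$ the multiplication operator by $\sqrt{d(\cdot,\cdot)}$, exactly as in the surrounding text. Since $\Omega = (0,a)\times(0,b)$ is a rectangle, the Dirichlet Laplacian is diagonalizable with the standard product sine eigenbasis $\phi_{jk}(x,y)=\tfrac{2}{\sqrt{ab}}\sin(\tfrac{j\pi x}{a})\sin(\tfrac{k\pi y}{b})$ and eigenvalues $\mu_{jk}=\pi^2(\tfrac{j^2}{a^2}+\tfrac{k^2}{b^2})$, so the spaces $H_\theta(\Delta)$ and their graph norms from~\eqref{E:Hspace} are available and Corollary~\ref{C:mainth} applies. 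The rank-one perturbation in~\eqref{E:pW} corresponds to $Y=\dC$, $B_2 = b_2$ (viewing $b_2\in L^2(\Omega)$ as the operator $y\mapsto y\,b_2$), and $C = (C_1,C_2)$ where $C_1 w = \iprod{w}{\overline{c_1}}$ and $C_2 v = \iprod{v}{\overline{c_2}}$, so that $C_1^\ast$ and $C_2^\ast$ are identified with $c_1$ and $c_2$; under these identifications the operator norms $\norm{B_2}_{\Lin(Y,H_{\beta/2})}$, $\norm{C_1^\ast}_{\Lin(Y,H_{(\gamma-1)/2})}$, $\norm{C_2^\ast}_{\Lin(Y,H_{\gamma/2})}$ reduce precisely to the function-space norms $\norm{b_2}_{H_{\beta/2}}$, $\norm{c_1}_{H_{(\gamma-1)/2}}$, $\norm{c_2}_{H_{\gamma/2}}$ appearing in~\eqref{E:rank2}.

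Next I would verify the two remaining technical requirements of Corollary~\ref{C:mainth}. The Hilbert--Schmidt condition is automatic here because $Y=\dC$ is one-dimensional, so every bounded operator into or out of $Y$ is finite-rank and hence Hilbert--Schmidt; thus $(-\Delta)^{\beta/2}b_2$, $(-\Delta)^{(\gamma-1)/2}c_1$, and $(-\Delta)^{\gamma/2}c_2$ are trivially Hilbert--Schmidt once the membership hypotheses $b_2\in H_{\beta/2}$, $c_1\in H_{(\gamma-1)/2}$, $c_2\in H_{\gamma/2}$ hold. The smallness conditions~\eqref{E:rank2} are then literally the specialization of the smallness conditions in Corollary~\ref{C:mainth}. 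The only genuinely computational point is the value of the constant $M$ in $K_\theta$: Corollary~\ref{C:mainth} gives $M = 1 + \norm{D_0}^2\norm{(-L)^{-1/2}}$, and I would compute each factor concretely. Here $\norm{D_0}^2 = \norm{\sqrt{d}\,}_{L^\infty}^2 = \norm{d}_{L^\infty}$, while $\norm{(-\Delta)^{-1/2}} = \mu_{11}^{-1/2} = \bigl(\pi^2(\tfrac{1}{a^2}+\tfrac{1}{b^2})\bigr)^{-1/2} = \tfrac{ab}{\pi\sqrt{a^2+b^2}}$, which yields exactly $M = 1 + \tfrac{ab\,\norm{d}_{L^\infty}}{\pi\sqrt{a^2+b^2}}$ as stated.

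With the abstract hypotheses verified, Corollary~\ref{C:mainth} directly gives that the semigroup generated by $A+BC$ is polynomially stable with the same $\alpha$, which is the abstract form of polynomial stability of~\eqref{E:pW}. The final quantitative decay estimate then follows by unwinding Definition~\ref{PolSt} into the second-order norms. Concretely, polynomial stability of $T_{A+BC}(t)$ means $\norm{T_{A+BC}(t)(A+BC)^{-1}}\leq M/t^{1/\alpha}$, and applying this to an initial state $u_0=(w_0,w_1)^\top$ of the form $(A+BC)v_0$ with $v_0\in\dom(A+BC)$, together with the definition of the energy inner product $\norm{u}_\cH^2 = \norm{(-L)^{1/2}u_1}^2 + \norm{u_2}^2$, converts the semigroup bound into the stated inequality, where $\norm{(-\Delta)^{1/2}w}_{L^2}$ is equivalent to the $H^1$-seminorm (in fact the full $H_0^1$ norm by the Poincaré inequality) and the graph-norm control of $(A+BC)^{-1}$ on $\dom(A+BC)\subset H^2(\Omega)\times H_0^1(\Omega)$ produces the $H^2$ and $H^1$ norms of the initial data on the right-hand side. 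The constant $\Mpert>0$ absorbs the equivalence constants between these Sobolev norms and the energy norm.

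I expect the main obstacle to be bookkeeping rather than conceptual: one must carefully confirm that the identifications of $C_1^\ast,C_2^\ast$ with the functions $c_1,c_2$ are consistent with the convention (adopted just before Theorem~\ref{T:1.2}) that $C_1^\ast$ is the adjoint of $C_1\colon\dom C_1\subset X_0\to Y$ relative to the pivot space $X_0$, so that the duality $H_{-1/2}(\Delta) = H_{1/2}(\Delta)'$ is used correctly and the norm $\norm{c_1}_{H_{(\gamma-1)/2}}$ is the right one when $c_1\in H_{-1/2}(\Delta)$ and $\gamma\le 1$. The remaining subtlety is the passage from the abstract decay of $\norm{T_{A+BC}(t)(A+BC)^{-1}}$ to the explicit second-order estimate with the correct Sobolev norms on both sides; this requires the standard observation that $\dom(A+BC)=\dom A = \dom L\times\dom(-L)^{1/2}$ (since $BC$ is bounded) and that the graph norm of $A+BC$ controls $(\norm{w_0}_{H^2}^2 + \norm{w_1}_{H^1}^2)^{1/2}$ up to a constant.
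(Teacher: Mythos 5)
Your proposal is correct and takes essentially the same route as the paper's proof: identify the perturbation as the rank-one case of~\eqref{PGWE2} with $Y=\dC$, $B_2=b_2$, $C_1^\ast=c_1$, $C_2^\ast=c_2$ (with the Hilbert--Schmidt hypotheses automatic for operators into and out of $\dC$), invoke Corollary~\ref{C:mainth}, and compute $M = 1+\norm{D_0}^2\norm{(-\Delta)^{-1/2}} = 1+\frac{ab\norm{d}_{L^\infty}}{\pi\sqrt{a^2+b^2}}$ from the smallest Dirichlet eigenvalue $\pi^2(a^2+b^2)/(a^2b^2)$. Your explicit unwinding of Definition~\ref{PolSt} into the second-order Sobolev decay estimate, using $\dom(A+BC)=\dom A=\dom\Delta\times\dom(-\Delta)^{1/2}$ and the equivalence of the graph norm with the $H^2\times H^1$ norms, is a correct elaboration of a step the paper leaves implicit.
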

\begin{proof}
    In this case the perturbed wave equation has the form~\eqref{PGWE2} with $Y=\mathbb{C}$ and
\begin{align*}
  B_2 = b_2\in L^2(\Omega), \qquad C_1 = \langle\cdot,c_1\rangle_{H_{1/2},H_{-1/2}}, \qquad \mbox{and} \qquad C_2 = \langle\cdot,c_2\rangle_{L^2},
\end{align*}
where $\langle\cdot,\cdot\rangle_{H_{1/2},H_{-1/2}}$ denotes the dual pairing between
$H_{1/2}(\Delta)$
and $H_{-1/2}(\Delta)$.
Since $B_2=b_2$, $C_1^\ast = c_1$ and $C_2^\ast = c_2$, the claim follows from Corollary~\ref{C:mainth} and a suitable upper bound for $\|D_0\|^2\left\|(-\Delta)^{-1/2}\right\|$.
  Since $-\Delta$ is a positive self-adjoint operator with compact resolvent and its smallest eigenvalue is $\pi^2 (a^2+b^2)/(a^2b^2)$, we have $\left\|(-\Delta)^{-1/2}\right\|\leq\frac{ab}{\pi\sqrt{a^2+b^2}}$.
Since $\|D_0\|= \|\sqrt {d(\cdot,\cdot)}\|_{L^\infty}=\sqrt{\| d(\cdot,\cdot)\|_{L^\infty}}$, the claim holds for $M = 1+\frac{ ab\|d\|_{L^\infty}}{\pi\sqrt{a^2+b^2}}$.
\end{proof}

\begin{remark}
  Conditions \eqref{E:rank2} have the simplest form if we choose $\beta=\gamma=1$
  \begin{equation*}
    \|b_2\|_{H_{1/2}}<\frac{\kappa}{M},\quad \|c_1\|^2_{L^2}+\|c_2\|^2_{H_{1/2}}<\frac{\kappa^2}{M^2},
  \end{equation*}
  where $M = 1+\frac{ ab\|d\|_{L^\infty}}{\pi\sqrt{a^2+b^2}}$.
\end{remark}

\begin{remark}
  If the damping function is as in~\eqref{eq:stripdamping}, then the exponent of polynomial stability is $\a=3/2$. In this case  $\beta$, $\gamma$ can be chosen as $\beta=\frac{1}{2}$ and $\gamma=1$ and conditions \eqref{E:rank2} take the form
  \begin{equation*}
    \|b_2\|_{H_{1/4}}<\frac{\kappa}{K_{1/2}},\quad \|c_1\|^2_{L^2}+\|c_2\|^2_{H_{1/2}}<\frac{\kappa^2}{M^2},
  \end{equation*}
  where $K_{1/2}=e^{\frac{\pi^2}{8}}\sqrt M$ with $M = 1+\frac{ ab}{\pi\sqrt{a^2+b^2}}$.
\end{remark}
\end{subsection}
\begin{subsection}{Finite rank perturbations}
  We now consider the wave equation~\eqref{E:DWE} with a finite number of perturbation terms
\begin{equation}\label{E:pWf}
    \begin{split}
      &w_{tt}(t,x,y)-\Delta w(t,x,y)+d(x,y)w_t(t,x,y)\\
      &=\sum\limits_{k=1}^m b_{k,2}(x,y)\int\limits_\Omega \left(w(t,\xi,\eta) c_{k,1}(\xi,\eta)+ w_t(t,\xi,\eta)c_{k,2}(\xi,\eta)\right)d\xi d\eta
      \end{split}
    \end{equation}
    with $b_{k,2},c_{k,2}\in L^2(\Omega)$ and $c_{k,1}\in H_{-1/2}(\Delta)$. 

\begin{theorem}
Assume that damped wave equation \eqref{E:DWE} is polynomially stable with $\a\leq 2$, $0\leq\beta,\gamma\leq 1$ are such that $\beta+\gamma\geq\a$,
and $\kappa>0$ is as in Theorem~\textup{\ref{T:1.2}}.
If for all $k\in \{1,\ldots,m\}$ we have
  $b_{k,2}\in H_{\beta/2}(\Delta)$, $c_{k,1}\in H_{{(\gamma-1)/2}}(\Delta)$, and $c_{k,2}\in H_{\gamma/2}(\Delta)$
  and
  \begin{equation}\label{E:rank_m}
  \begin{split}
   &\left\|c_{k,1}\right\|^2_{H_{{(\gamma-1)/2}}}
   +\left\|c_{k,2}\right\|^2_{H_{\gamma/2}}
   <\frac{\kappa^2}{m^2K^2_\gamma},\\
  &\left\|b_{k,2}\right\|_{H_{\beta/2}}<\frac{\kappa}{mK_\beta},
  \end{split}
  \end{equation}
 then the perturbed wave equation \eqref{E:pWf} is polynomially stable with the same $\a$.
 Here $K_\theta = e^{\frac{1}{2}{\pi^2\theta(1-\theta)}} M^\theta$ with
 $M = 1+\frac{ ab\|d\|_{L^\infty}}{\pi\sqrt{a^2+b^2}}$ and $\t\in[0,1]$.
For such perturbations there exists $\Mpert>0$ such that the solutions of~\eqref{E:pWf} corresponding to initial conditions $w_0\in H^2(\Omega)\cap H_0^1(\Omega)$ and $w_1\in H_0^1(\Omega)$ satisfy
\begin{align*}
  \norm{w(t,\cdot,\cdot)}_{H^1}^2 + \norm{w_t(t,\cdot,\cdot)}_{L^2}^2\leq \frac{\Mpert}{t^{2/\a}}\left( \norm{w_0}_{H^2}^2 + \norm{w_1}_{H^1}^2 \right), \qquad t>0.
\end{align*}
 
\end{theorem}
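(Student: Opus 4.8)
The plan is to observe that the perturbed equation~\eqref{E:pWf} is of the form~\eqref{PGWE2} with $Y=\mathbb{C}^m$ and to apply Corollary~\ref{C:mainth}, which is available because $-\Delta$ is diagonalisable. First I would identify the perturbation operators $B_2\in\Lin(\mathbb{C}^m,L^2(\Omega))$, $C_1\colon H_{1/2}(\Delta)\to\mathbb{C}^m$, and $C_2\in\Lin(L^2(\Omega),\mathbb{C}^m)$ by
\begin{align*}
  B_2\xi = \sum_{k=1}^m \xi_k\, b_{k,2}, \qquad (C_1 w)_k = \langle w,c_{k,1}\rangle_{H_{1/2},H_{-1/2}}, \qquad (C_2 w)_k = \langle w,c_{k,2}\rangle_{L^2},
\end{align*}
so that the adjoints act on the standard basis $\{e_k\}$ of $\mathbb{C}^m$ by $C_1^\ast e_k = c_{k,1}$ and $C_2^\ast e_k = c_{k,2}$. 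The smoothness hypotheses $b_{k,2}\in H_{\beta/2}(\Delta)$, $c_{k,1}\in H_{(\gamma-1)/2}(\Delta)$, and $c_{k,2}\in H_{\gamma/2}(\Delta)$ then give $B_2\in\Lin(\mathbb{C}^m,H_{\beta/2}(\Delta))$, $C_1^\ast\in\Lin(\mathbb{C}^m,H_{(\gamma-1)/2}(\Delta))$, and $C_2^\ast\in\Lin(\mathbb{C}^m,H_{\gamma/2}(\Delta))$; since $Y=\mathbb{C}^m$ is finite-dimensional, the operators $(-\Delta)^{\beta/2}B_2$, $(-\Delta)^{(\gamma-1)/2}C_1^\ast$, and $(-\Delta)^{\gamma/2}C_2^\ast$ are of finite rank and hence automatically Hilbert--Schmidt.

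The main content is to estimate the three operator norms required by Corollary~\ref{C:mainth} in terms of the per-term quantities controlled by~\eqref{E:rank_m}. For $B_2$ I would use the triangle inequality: for $\xi\in\mathbb{C}^m$ with $\norm{\xi}=1$ one has $\abs{\xi_k}\leq 1$, so
\begin{align*}
  \norm{B_2\xi}_{H_{\beta/2}} \leq \sum_{k=1}^m \abs{\xi_k}\,\norm{b_{k,2}}_{H_{\beta/2}} \leq \sum_{k=1}^m \norm{b_{k,2}}_{H_{\beta/2}} < m\cdot\frac{\kappa}{mK_\beta} = \frac{\kappa}{K_\beta}.
\end{align*}
For $C_1^\ast$ and $C_2^\ast$ I would instead apply the Cauchy--Schwarz inequality in $\mathbb{C}^m$, giving $\norm{C_1^\ast\xi}_{H_{(\gamma-1)/2}}^2\leq \norm{\xi}^2\sum_k\norm{c_{k,1}}_{H_{(\gamma-1)/2}}^2$ and likewise for $C_2^\ast$; adding these and using~\eqref{E:rank_m} yields
\begin{align*}
  \norm{C_1^\ast}_{\Lin(\mathbb{C}^m,H_{(\gamma-1)/2})}^2 + \norm{C_2^\ast}_{\Lin(\mathbb{C}^m,H_{\gamma/2})}^2 \leq \sum_{k=1}^m\left(\norm{c_{k,1}}_{H_{(\gamma-1)/2}}^2 + \norm{c_{k,2}}_{H_{\gamma/2}}^2\right) < m\cdot\frac{\kappa^2}{m^2K_\gamma^2} = \frac{\kappa^2}{mK_\gamma^2} \leq \frac{\kappa^2}{K_\gamma^2}.
\end{align*}
These are exactly the two hypotheses of Corollary~\ref{C:mainth}, and the value of $M$ (hence of $K_\beta,K_\gamma$) is the same as in Theorem~\ref{T:rank1}, using $\norm{(-\Delta)^{-1/2}}\leq ab/(\pi\sqrt{a^2+b^2})$ together with $\norm{D_0}=\sqrt{\norm{d}_{L^\infty}}$.

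With these estimates in hand, Corollary~\ref{C:mainth} shows that the semigroup generated by $A+BC$ is polynomially stable with the same $\alpha$, establishing the stability claim. The final decay estimate then follows by translating Definition~\ref{PolSt} back to the original variables $u=(w,w_t)^\top$ exactly as in Theorem~\ref{T:rank1}: polynomial stability provides a constant $\Mpert>0$ with $\norm{T_{A+BC}(t)(A+BC)^{-1}}\leq \Mpert\, t^{-1/\alpha}$, and since the $\cH$-norm of $u(t)$ is comparable to $\norm{w(t,\cdot,\cdot)}_{H^1}^2+\norm{w_t(t,\cdot,\cdot)}_{L^2}^2$ while the $\cH$-norm of $(A+BC)u(0)$ is controlled by $\norm{w_0}_{H^2}^2+\norm{w_1}_{H^1}^2$, the stated bound results (after relabelling the constant). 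I do not expect a genuine obstacle here; the only point requiring care is the bookkeeping of the combinatorial factors, where the single power of $m$ in the denominator of the $b_{k,2}$ bound and the square $m^2$ in the denominators of the $c_{k,j}$ bounds in~\eqref{E:rank_m} are precisely calibrated so that summing $m$ per-term estimates reproduces the single-operator thresholds $\kappa/K_\beta$ and $\kappa^2/K_\gamma^2$ of Corollary~\ref{C:mainth}.
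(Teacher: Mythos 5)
Your proposal is correct and takes essentially the same route as the paper: the same identification of $B_2$, $C_1$, $C_2$ with $Y=\mathbb{C}^m$, the triangle-inequality bound for $B_2$, and the same appeal to Corollary~\ref{C:mainth} with the constant $M$ from Theorem~\ref{T:rank1}. The only (harmless) deviation is that you estimate $\|C_1^\ast\|^2+\|C_2^\ast\|^2$ via Cauchy--Schwarz, obtaining the sharper intermediate bound $\kappa^2/(mK_\gamma^2)$, whereas the paper uses the cruder estimate $\bigl(\sum_k a_k\bigr)^2\leq m\sum_k a_k^2$ and lands exactly at $\kappa^2/K_\gamma^2$; both suffice for the hypotheses of the corollary.
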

\begin{proof}
    The perturbation can be written in the form~\eqref{PGWE2} with the choice $Y=\mathbb{C}^m$ and defining $B_2\in \Lin(Y,X_0)$, $C_1\in \Lin(\Dom(-L)^{1/2},Y)$, and $C_2\in \Lin(X_0,Y)$ so that
    \begin{align*}
      B_2 y &= \sum_{k=1}^m b_{k,2}y_k, \\
      C_1x &= (\langle x,c_{k,1}\rangle_{H_{1/2},H_{-1/2}})_{k=1}^m\in Y, \\
      C_2z &= (\langle z,c_{k,2}\rangle_{L^2})_{k=1}^m\in Y
    \end{align*}
    for all $ y=(y_k)_{k=1}^m \in Y$, $x\in H_{1/2}(\Delta)$ and $z\in L^2(\Omega)$.
  For any $0\leq\beta,\gamma\leq1$ we have
  \begin{align*}
    \|B_2\|_{\Lin(\dC^m,H_{\beta/2})}
    &\leq\sum_{k=1}^m\|b_{k,2}\|_{H_{\beta/2}}< m\frac{\kappa}{mK_\beta}=\frac{\kappa}{K_\beta}\\
    \left\| C^*_1\right\|^2_{\Lin(\dC^m,H_{(\gamma-1)/2})}
    +\left\|C^*_2\right\|^2_{\Lin(\dC^m,H_{\gamma/2})}
    &\leq
    m\sum_{k=1}^m\left(\left\|c_{k,1}\right\|^2_{H_{{(\gamma-1)/2}}}
      +\left\|c_{k,2}\right\|^2_{H_{\gamma/2}}   \right)\\
    & <m^2\frac{\kappa^2}{m^2K_\gamma^2}=\frac{\kappa^2}{K_\gamma^2},
  \end{align*}
   and thus the claims follow from Corollary~\ref{C:mainth} as in the proof of Theorem~\ref{T:rank1}.
   \end{proof}
\end{subsection}

\begin{subsection}{Hilbert--Schmidt perturbations}
   Now we consider a more general case of perturbations of the wave equation

   \begin{equation}\label{E:pWH}
    \begin{split}
      &w_{tt}(t,x,y)-\Delta w(t,x,y)+d(x,y)w_t(t,x,y)\\
      &=\sum\limits_{k=1}^\infty b_{k,2}(x,y)\int\limits_\Omega \left(w(t,\xi,\eta) c_{k,1}(\xi,\eta)+ w_t(t,\xi,\eta)c_{k,2}(\xi,\eta)\right)d\xi d\eta
      \end{split}
    \end{equation}
    where the functions $b_{k,2},c_{k,2}\in L^2(\Omega)$ and $c_{k,1}\in H_{-1/2}(\Delta)$ of the perturbation are assumed to satisfy
\begin{align*}
  \sum\limits_{k=1}^\infty\|b_{k,2}\|^2_{L_2(\Omega)}<\infty, \quad
  \sum\limits_{k=1}^\infty\|c_{k,1}\|^2_{H_{-1/2}}<\infty, \quad
  \mbox{and} \quad \sum\limits_{k=1}^\infty\|c_{k,2}\|^2_{L_2(\Omega)}<\infty.
\end{align*}
The stability of this perturbed wave equation can be studied using Corollary~\ref{C:mainth} for Hilbert--Schmidt perturbations.

\begin{theorem}
Assume that damped wave equation \eqref{E:DWE} is polynomially stable with $\a\leq 2$, $0\leq\beta,\gamma\leq 1$ are such that $\beta+\gamma\geq\a$,
and $\kappa>0$ is as in Theorem~\textup{\ref{T:1.2}}.
If for all $k\in \mathbb{N}$ we have
$b_{k,2}\in H_{\beta/2}(\Delta)$, $c_{k,1}\in H_{{(\gamma-1)/2}}(\Delta)$, and $c_{k,2}\in H_{\gamma/2}(\Delta)$
       and
\begin{align*}
  &\sum_{k=1}^\infty\left\|b_{k,2}\right\|^2_{H_{\beta/2}}
    <\frac{\kappa^2}{K_\beta^2 },\\
    &\sum_{k=1}^\infty \left\|c_{k,1}\right\|^2_{H_{{(\gamma-1)/2}}}
     +\left\|c_{k,2}\right\|^2_{H_{\gamma/2}}
     <\frac{ \kappa^2}{K^2_\gamma },
\end{align*}
 then the perturbed wave equation \eqref{E:pWH} is polynomially stable with the same $\a$.
 Here $K_\theta = e^{\frac{1}{2}{\pi^2\theta(1-\theta)}} M^\theta$ with
 $M = 1+\frac{ ab\|d\|_{L^\infty}}{\pi\sqrt{a^2+b^2}}$ and $\t\in[0,1]$.
For such perturbations there exists $\Mpert>0$ such that the solutions of~\eqref{E:pWH} corresponding to initial conditions $w_0\in H^2(\Omega)\cap H_0^1(\Omega)$ and $w_1\in H_0^1(\Omega)$ satisfy
\begin{align*}
  \norm{w(t,\cdot,\cdot)}_{H^1}^2 + \norm{w_t(t,\cdot,\cdot)}_{L^2}^2\leq \frac{\Mpert}{t^{2/\a}}\left( \norm{w_0}_{H^2}^2 + \norm{w_1}_{H^1}^2 \right), \qquad t>0.
\end{align*}
\end{theorem}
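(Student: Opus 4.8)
The plan is to realise the infinite family of perturbation terms as a single perturbation of the form~\eqref{PGWE2} with the infinite-dimensional parameter space $Y=\ell^2(\N)$, and then to apply Corollary~\ref{C:mainth}. First I would equip $Y=\ell^2(\N)$ with its standard orthonormal basis $\set{e_k}_{k\in\N}$ and define the operators $B_2\in\Lin(Y,X_0)$, $C_1\colon H_{1/2}(\Delta)\to Y$, and $C_2\in\Lin(X_0,Y)$ by
\eq{
  B_2 y &= \sum_{k=1}^\infty b_{k,2}\,y_k, \\
  C_1 x &= \left(\langle x,c_{k,1}\rangle_{H_{1/2},H_{-1/2}}\right)_{k=1}^\infty, \\
  C_2 z &= \left(\langle z,c_{k,2}\rangle_{L^2}\right)_{k=1}^\infty
}
for $y=(y_k)_{k}\in Y$, $x\in H_{1/2}(\Delta)$, and $z\in L^2(\Omega)$. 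The standing summability assumptions $\sum_k\norm{b_{k,2}}_{L^2}^2<\infty$, $\sum_k\norm{c_{k,1}}_{H_{-1/2}}^2<\infty$, and $\sum_k\norm{c_{k,2}}_{L^2}^2<\infty$ for~\eqref{E:pWH} ensure, through the Cauchy--Schwarz inequality, that these operators are well defined and bounded, and a direct computation of the dual pairings shows that their adjoints act on the basis by $C_1^\ast e_k=c_{k,1}$ and $C_2^\ast e_k=c_{k,2}$.

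The key step is to evaluate the Hilbert--Schmidt norms with respect to $\set{e_k}$. Since $B_2 e_k=b_{k,2}$, one obtains
\eq{
  \norm{(-L)^{\beta/2}B_2}_{\mathrm{HS}}^2 = \sum_{k=1}^\infty \norm{(-L)^{\beta/2}b_{k,2}}_{X_0}^2 = \sum_{k=1}^\infty \norm{b_{k,2}}_{H_{\beta/2}}^2,
}
and in the same way $\norm{(-L)^{(\gamma-1)/2}C_1^\ast}_{\mathrm{HS}}^2=\sum_k\norm{c_{k,1}}_{H_{(\gamma-1)/2}}^2$ and $\norm{(-L)^{\gamma/2}C_2^\ast}_{\mathrm{HS}}^2=\sum_k\norm{c_{k,2}}_{H_{\gamma/2}}^2$. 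The hypotheses of the theorem make all three sums finite, so the operators $(-L)^{\beta/2}B_2$, $(-L)^{(\gamma-1)/2}C_1^\ast$, and $(-L)^{\gamma/2}C_2^\ast$ are Hilbert--Schmidt, which is precisely the regularity requirement of Corollary~\ref{C:mainth}; in particular $\ran B_2\subset H_{\beta/2}(\Delta)$, $\ran C_1^\ast\subset H_{(\gamma-1)/2}(\Delta)$, and $\ran C_2^\ast\subset H_{\gamma/2}(\Delta)$.

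Finally, using the elementary estimate $\norm{T}\leq\norm{T}_{\mathrm{HS}}$ between the operator and Hilbert--Schmidt norms, the smallness hypotheses convert directly into the operator-norm conditions of Corollary~\ref{C:mainth}: the first gives $\norm{B_2}_{\Lin(Y,H_{\beta/2})}\leq\norm{(-L)^{\beta/2}B_2}_{\mathrm{HS}}<\kappa/K_\beta$, and the second gives $\norm{C_1^\ast}_{\Lin(Y,H_{(\gamma-1)/2})}^2+\norm{C_2^\ast}_{\Lin(Y,H_{\gamma/2})}^2\leq\sum_k\left(\norm{c_{k,1}}_{H_{(\gamma-1)/2}}^2+\norm{c_{k,2}}_{H_{\gamma/2}}^2\right)<\kappa^2/K_\gamma^2$. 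Corollary~\ref{C:mainth}, applied with the same constant $M=1+ab\norm{d}_{L^\infty}/(\pi\sqrt{a^2+b^2})$ as in Theorem~\ref{T:rank1}, then yields polynomial stability of the semigroup generated by $A+BC$ with the same exponent $\alpha$, and the concrete decay estimate for the solutions of~\eqref{E:pWH} follows exactly as in the proof of Theorem~\ref{T:rank1}. I do not expect a genuine obstacle: the only subtle point is recognising that the choice $Y=\ell^2(\N)$ with its standard basis makes the Hilbert--Schmidt norm of each weighted perturbation operator equal to the square root of the prescribed sum, so that one family of summability bounds simultaneously certifies the Hilbert--Schmidt property and, via $\norm{\cdot}\leq\norm{\cdot}_{\mathrm{HS}}$, supplies the operator-norm smallness needed to invoke the corollary.
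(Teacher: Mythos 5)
Your proposal is correct and takes essentially the same route as the paper's own proof: both realise the perturbation with $Y=\ell^2$, identify the prescribed sums as the squared Hilbert--Schmidt norms of $(-\Delta)^{\beta/2}B_2$, $(-\Delta)^{(\gamma-1)/2}C_1^\ast$, and $(-\Delta)^{\gamma/2}C_2^\ast$, and then invoke Corollary~\ref{C:mainth} exactly as in the rank-one case. Your only addition is spelling out the inequality $\norm{T}\leq\norm{T}_{\mathrm{HS}}$ that converts the summability bounds into the operator-norm smallness conditions, a step the paper leaves implicit.
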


\begin{proof}
The perturbations can be written in the form~\eqref{PGWE2} with $Y=\ell^2(\dC)$ 
if we define $B_2$, $C_1$, and $C_2$ so that
    \begin{align*}
      B_2 y &= \sum_{k=1}^\infty b_{k,2}y_k, \\
      C_1x &= (\langle x,c_{k,1}\rangle_{H_{1/2},H_{-1/2}})_{k=1}^\infty, \\
      C_2z &= (\langle z,c_{k,2}\rangle_{L^2})_{k=1}^\infty,
    \end{align*}
    The assumptions
    $(\norm{b_{k,2}}_{L^2})_k\in\ell^2$, $(\norm{c_{k,1}}_{H_{-1/2}})_k\in\ell^2$, and $(\norm{c_{k,2}}_{L^2})_k\in\ell^2$ 
     imply that $B_2\in \Lin(Y,X_0)$, $C_1\in \Lin(H_{1/2},Y)$ and $C_2\in \Lin(X_0,Y)$.
If we let $0\leq\beta,\gamma\leq1$ be such that $\beta+\gamma\geq\a$, then 
\begin{align*}
   &\sum\limits_{k=1}^\infty \| (-\Delta)^{\beta/2}b_{k,2} \|_{X_0}^2
  =\sum\limits_{k=1}^\infty \| b_{k,2} \|_{H_{\beta/2}}^2
       <
\frac{\kappa}{ K_\beta}\\
&\sum_{k=1}^\infty\left(\left\|(-\Delta)^{(\gamma-1)/2}c_{k,1}\right\|^2_{X_0}
  +\left\|(-\Delta)^{\gamma/2}c_{k,2}\right\|^2_{X_0}   \right)\\
&\hspace{3cm}=
\sum_{k=1}^\infty\left(\left\|c_{k,1}\right\|^2_{H_{{(\gamma-1)/2}}}
   +\left\|c_{k,2}\right\|^2_{H_{\gamma/2}}   \right)
 <
 \frac{\kappa^2}{ K_\gamma^2}.  
\end{align*}
imply that 
$(-\Delta)^{\beta/2} B_2$, $(-\Delta)^\frac{\gamma-1}{2} C^*_1$, and $(-\Delta)^{\gamma/2} C^*_2$ are Hilbert--Schmidt operators and
that 
\begin{equation*}
  \left\|B_2\right\|_{\Lin(Y,H_{\beta/2})}<\frac{\kappa}{K_\beta}, \qquad \mbox{and} \qquad
  \left\|C^*_1\right\|^2_{\Lin(Y,H_{(\gamma-1)/2})}
  +\left\|C^*_2\right\|^2_{\Lin(Y,H_{\gamma/2})}<\frac{\kappa^2}{K_\gamma^2}.
\end{equation*}
Thus the claim follows from Corollary~\ref{C:mainth} as in the proof of Theorem~\ref{T:rank1}.
     \end{proof}
\end{subsection}

\subsection{Wave equation with "almost dissipative" damping}\label{S:ADD}

Finally, we consider the two-dimensional damped wave equation with a perturbed damping term, namely
 \begin{equation}\label{E:pWad}
    \begin{split}
      &w_{tt}(t,x,y)-\Delta w(t,x,y)+d(x,y)w_t(t,x,y)\\
      &- b_2(x,y)\int\limits_\Omega \sqrt {d(\xi,\eta)}w_t(t,\xi,\eta) c(\xi,\eta)d\xi d\eta=0
      \end{split}
    \end{equation} 
 with $b_2,c\in L^2(\Omega)$. 
    We also make an additional assumption that  $d\in C^2(\Omega)$.
The structure of the perturbed semigroup generator is now
 $\wt A:=A_0-(D+B)D^*=A-BD^*$, where $D=(0,\sqrt {d(\cdot)})^\top$  and $B=(0,b_2\langle\cdot,c\rangle_{L^2})^\top$.
 Because of this structure, the damping in the wave equation~\eqref{E:pWad} can be thought to be ``almost dissipative''.

 Since we assumed that the damping coefficient is smooth, i.e. $d(\cdot)\in C^2(\Omega)$, it is possible to characterise the higher order domain $\Dom A^2$ and
the stability of~\eqref{E:pWad} can be studied using
Theorem~\ref{T:1.1} with the parameters $\beta=2$ and $\gamma=0$, as shown in the following theorem.
\begin{theorem}
Assume that damped wave equation \eqref{E:pWad} is polynomially stable with $\a\leq 2$ in the case where $b_2=0$. 
  There exists $\kappa>0$ such that if
  $b_2\in\dom\Delta$ and $c\in L^2(\Omega)$ satisfy
  \begin{equation}\label{E:aldd}
    \|\sqrt dc\|_{L^2}<\kappa,\quad \|db_2\|_{H_{1/2}}^2+\| b_2\|_{H^1}^2<\frac{\kappa^2}{M^2},
  \end{equation}
  where $M=1+\frac{ab\|d\|^2_{L^\infty}}{\pi\sqrt{a^2+b^2}}$, then \eqref{E:pWad} is polynomially stable with the same $\a$.
For such perturbations there exists $\Mpert>0$ such that the solutions of~\eqref{E:pWad} corresponding to initial conditions $w_0\in H^2(\Omega)\cap H_0^1(\Omega)$ and $w_1\in H_0^1(\Omega)$ satisfy
\begin{align*}
  \norm{w(t,\cdot,\cdot)}_{H^1}^2 + \norm{w_t(t,\cdot,\cdot)}_{L^2}^2\leq \frac{\Mpert}{t^{2/\a}}\left( \norm{w_0}_{H^2}^2 + \norm{w_1}_{H^1}^2 \right), \qquad t>0.
\end{align*}
\end{theorem}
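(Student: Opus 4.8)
The plan is to realise the perturbed generator $\wt A=A-BD^\ast$ as a rank-one perturbation in the framework of Theorem~\ref{T:1.1} and to verify its hypotheses with the integer exponents $\beta=2$ and $\gamma=0$. Although the factors $B=(0,b_2\iprod{\cdot}{c}_{L^2})^\top$ and $D=(0,\sqrt d\,)^\top$ individually have infinite-dimensional structure, their product is rank one: for $u=(u_1,u_2)^\top\in\cH$ one has $BD^\ast u=(0,b_2)^\top\iprod{u_2}{\sqrt d\,c}_{L^2}=(0,b_2)^\top\iprod{u}{(0,\sqrt d\,c)^\top}_\cH$. Accordingly I would take $Y=\dC$, $\hat B=(0,b_2)^\top\in\Lin(\dC,\cH)$ and $\hat C=-\iprod{\cdot}{(0,\sqrt d\,c)^\top}_\cH$, so that $\hat C^\ast=-(0,\sqrt d\,c)^\top$ and $\wt A=A+\hat B\hat C$. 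Since $Y$ is one-dimensional, the operators $(-A)^2\hat B$ and $(-A^\ast)^0\hat C^\ast$ are automatically finite rank and hence Hilbert--Schmidt, so those hypotheses of Theorem~\ref{T:1.1} hold trivially; moreover $\beta+\gamma=2\ge\alpha$, so the exponents are admissible, and only the two norm bounds of Theorem~\ref{T:1.1} remain to be checked.

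The bound coming from $\gamma=0$ is immediate, because $(-A^\ast)^0=I$ and $\norm{\hat C^\ast}_\cH=\norm{(0,\sqrt d\,c)^\top}_\cH=\norm{\sqrt d\,c}_{L^2}$, which reproduces the first inequality in~\eqref{E:aldd}. The substance of the theorem lies in the bound attached to $\beta=2$, for which I must first show $(0,b_2)^\top\in\dom A^2$ and then estimate $\norm{(-A)^2(0,b_2)^\top}_\cH$. Here the assumption $d\in C^2(\Omega)$ enters: combined with $b_2\in\dom\Delta=H^2(\Omega)\cap H_0^1(\Omega)$ it guarantees $d\,b_2\in H^2(\Omega)\cap H_0^1(\Omega)$, so that $A(0,b_2)^\top=(b_2,-d\,b_2)^\top\in\dom A$ and hence $(0,b_2)^\top\in\dom A^2$. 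A direct block computation with $A=\pmat{0&I\\\Delta&-d}$ then gives $(-A)^2(0,b_2)^\top=(-d\,b_2,\ \Delta b_2+d^2b_2)^\top$, whence $\norm{(-A)^2(0,b_2)^\top}_\cH^2=\norm{d\,b_2}_{H_{1/2}}^2+\norm{\Delta b_2+d^2b_2}_{L^2}^2$.

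It remains to absorb the zeroth-order contribution $d^2b_2$ into a single constant. I would estimate $\norm{\Delta b_2+d^2b_2}_{L^2}\le\norm{\Delta b_2}_{L^2}+\norm{d}_{L^\infty}^2\norm{b_2}_{L^2}$ and control $\norm{b_2}_{L^2}$ through the spectral lower bound of $-\Delta$ on $\Omega$, whose smallest eigenvalue equals $\pi^2(a^2+b^2)/(a^2b^2)$, so that $\norm{(-\Delta)^{-1/2}}\le ab/(\pi\sqrt{a^2+b^2})$. Collecting these estimates, the second-order and zeroth-order terms combine into $M$ times the norm of $b_2$ appearing in~\eqref{E:aldd}, with $M=1+ab\norm{d}_{L^\infty}^2/(\pi\sqrt{a^2+b^2})$, so that the second inequality in~\eqref{E:aldd} forces $\norm{(-A)^2\hat B}_\cH<\kappa$. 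With both norm bounds verified, Theorem~\ref{T:1.1} shows that $\wt A$ generates a polynomially stable semigroup with the same $\alpha$.

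Finally I would convert this into the claimed decay of solutions exactly as in the proof of Theorem~\ref{T:rank1}. For initial data $u_0=(w_0,w_1)^\top\in\dom\wt A$, i.e. $w_0\in H^2(\Omega)\cap H_0^1(\Omega)$ and $w_1\in H_0^1(\Omega)$, Definition~\ref{PolSt} gives $\norm{u(t)}_\cH\le\frac{C}{t^{1/\alpha}}\norm{\wt A u_0}_\cH$ for some $C>0$; squaring, using $\norm{u(t)}_\cH^2=\norm{(-\Delta)^{1/2}w(t)}_{L^2}^2+\norm{w_t(t)}_{L^2}^2$ together with the equivalence of $\norm{(-\Delta)^{1/2}\cdot}_{L^2}$ and $\norm{\cdot}_{H^1}$ on $H_0^1(\Omega)$, and bounding the graph norm $\norm{\wt A u_0}_\cH$ by $\norm{w_0}_{H^2}^2+\norm{w_1}_{H^1}^2$, yields the stated inequality with the factor $t^{-2/\alpha}$ and some $\Mpert>0$. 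I expect the main obstacle to be the analysis surrounding $\dom A^2$: verifying that the $C^2$-regularity of $d$ places $(0,b_2)^\top$ in $\dom A^2$ --- the boundary behaviour and $H^2$-regularity of the product $d\,b_2$ being the delicate point --- and then identifying the precise graph norms and the explicit constant $M$ in the estimate for $\norm{(-A)^2(0,b_2)^\top}_\cH$.
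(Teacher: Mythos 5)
Your overall strategy coincides with the paper's: both realise the perturbation as a rank-one operator with factors $\wt B=(0,b_2)^\top$ and $\wt C=(0,\langle\cdot,\sqrt d\,c\rangle)$ (your sign convention differs harmlessly), both invoke Theorem~\ref{T:1.1} with $\beta=2$, $\gamma=0$, both get the $\gamma=0$ bound for free since $\norm{\wt C^\ast}=\norm{\sqrt d\,c}_{L^2}$, and both use $d\in C^2(\Omega)$ to place $\ran\wt B$ in $\dom A^2$. On this last point you slightly over-ask: you claim $db_2\in H^2(\Omega)\cap H_0^1(\Omega)$, whereas $(b_2,-db_2)^\top\in\dom A$ only requires $db_2\in\dom(-\Delta)^{1/2}=H_0^1(\Omega)$, which is all the paper verifies; your stronger claim is harmless but unnecessary.

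The genuine divergence --- and the gap --- is in the estimate of $\norm{A^2\wt B}$. The paper factorizes $A=(I-DD^\ast A_0^{-1})A_0$, so that $A^2\wt B=(I-DD^\ast A_0^{-1})A_0A\wt B$ with $A_0A\wt B=(-db_2,\Delta b_2)^\top$; the quantity $\norm{db_2}_{H_{1/2}}^2+\norm{\Delta b_2}_{L^2}^2$ is then exactly what \eqref{E:aldd} bounds (with $\norm{b_2}_{H^1}$ read as the graph norm $\norm{\Delta b_2}_{L^2}$, as the paper's own proof does), and the single prefactor $\norm{I-DD^\ast A_0^{-1}}\leq 1+\norm{D}^2\norm{(-\Delta)^{-1/2}}\leq M$ produces the stated constant --- the term $d^2b_2$ never appears. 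You instead compute $A^2\wt B=(-db_2,\,\Delta b_2+d^2b_2)^\top$ directly and absorb $d^2b_2$ by the spectral bound; but $\norm{b_2}_{L^2}\leq\norm{(-\Delta)^{-1}}\norm{\Delta b_2}_{L^2}$ carries $\norm{(-\Delta)^{-1}}=a^2b^2/(\pi^2(a^2+b^2))$, the \emph{square} of the factor appearing in $M$, so your route yields the constant $1+\norm{d}_{L^\infty}^2\,a^2b^2/(\pi^2(a^2+b^2))$ rather than $M=1+\norm{d}_{L^\infty}^2\,ab/(\pi\sqrt{a^2+b^2})$. Whenever $ab/(\pi\sqrt{a^2+b^2})>1$ (large rectangles) your constant exceeds $M$, and then hypothesis \eqref{E:aldd} with the stated $M$ no longer guarantees $\norm{(-A)^2\wt B}<\kappa$ along your estimate; the alternative $\norm{b_2}_{L^2}\leq\norm{(-\Delta)^{-1/2}}\norm{b_2}_{H_{1/2}}$ uses only one power but introduces $\norm{b_2}_{H_{1/2}}$, which \eqref{E:aldd} does not control. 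So either adopt the paper's factorization trick, or restate the theorem with your (different) constant. Your concluding passage from the semigroup decay of Definition~\ref{PolSt} to the solution estimate is standard and consistent with the paper.
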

\begin{proof}
  Let $\kappa>0$ be as in Theorem~\ref{T:1.1} and suppose the assumptions on $b_2$ and $c$ are satisfied. We define $\wt B:=(0,b_2)^\top$ and $\wt C:=(0,\langle\cdot,\sqrt dc\rangle)$. It is clear that $\wt B\wt C=BD^*$.
  Our aim is to verify that the conditions of  Theorem~\ref{T:1.1} are satisfied for the perturbed operator for $A-\wt B\wt C$ with parameters $\beta=2$ and $\gamma=0$. 
We have
  \begin{equation*}
    \begin{split}
        \dom A^2&=\{u\in\dom A_0:Au\in\dom A_0\}\\
        &=\left\{u=\begin{pmatrix}u_1\\u_2\end{pmatrix}\in
        \begin{pmatrix}\dom \Delta\\\dom(-\Delta)^{1/2}\end{pmatrix}:
        \begin{pmatrix}u_2\\\Delta u_1-du_2\end{pmatrix}\in
        \begin{pmatrix}\dom \Delta\\\dom(-\Delta)^{1/2}\end{pmatrix}\right\},
    \end{split}
  \end{equation*}
and thus $\ran B= \{0\}\times \spann \{b_j\}
\subset\dom A^2$ provided that $b_2\in\dom\Delta$ and $db_2\in \dom(-\Delta)^{1/2}$. Since $d\in C^2(\Omega)$, the assumption $b_2\in\dom\Delta$ also implies $db_2\in \dom(-\Delta)^{1/2}$.

  The norm of $A^2B$ can be estimated by
  \begin{equation}\label{E:a2b}
  \begin{split}
    \|A^2B\|^2&\leq\|I-DD^*A_0^{-1}\|^2\|A_0AB\|^2\\
    &\leq(I+\|d\|_{L^\infty}\|(-\Delta)^{-1/2}\|)^2\|A_0AB\|^2\leq M^2 \|A_0AB\|^2,
    \end{split}
  \end{equation}
  where the last estimate is completed as in the proof of Theorem \ref{T:rank1}.
Moreover,
  \begin{equation}\label{E:a0ab}
  \begin{split}
    \|A_0AB\|^2&\leq\left\|
    \begin{pmatrix}
      \Delta&-d\\
      0 &\Delta
    \end{pmatrix}
    \begin{pmatrix}
      0\\
      b_2
    \end{pmatrix}
    \right\|^2=\left\|\begin{pmatrix}
      db_2\\
      \Delta b_2
    \end{pmatrix}
    \right\|^2\\
    &=(\|(-\Delta)^{1/2}(db_2)\|^2+\|\Delta b_2\|^2)<\frac{\kappa^2}{M^2}.
    \end{split}
  \end{equation}
 Thus $\|(-A)^2B\|<\kappa$.
 We also have that $\|(-A^*)^0C^*\|=\|\sqrt dc\|_{L^2}<\kappa$.
 The polynomial stability of the semigroup generated by $A-\wt B\wt C=A- BD^*$ follows from Theorem \ref{T:1.1} and then wave equation~\eqref{E:pWad} is polynomial stable with $\a$.
 \end{proof}

\section{Perturbations of Webster's equations}
In this section we show the polynomial stability of weakly damped Webster's equation and use Theorems~\ref{T:1.2} and~\ref{thm:kappaestimate} to derive sufficient conditions for the preservation of the stability under addition of perturbing terms.
We begin by considering an undamped Webster's equation on $\Omega=(0,1)$ which has the form
\begin{equation*}
  \left\{
  \begin{array}{ll}
    w_{tt}(t,x)
    =\frac{1}{r(x)}\left(r(x)w_x(t,x)\right)_x    \\
    w(t,0)=w(t,1)=0  \\
    w(0,x)=w_0(x),\quad w_t(0,x)=w_1(x).
  \end{array}\right.
\end{equation*}
We consider $r(x)=e^{ax}$, where $a\geq 0$.
Then Webster's equation takes the form
\begin{equation*}
    w_{tt}(t,x) = w_{xx}(t,x)+aw_x(t,x).
\end{equation*}
We denote by $L_a^2(0,1)$ the Hilbert space $L^2(0,1)$ with the inner product
$$
\langle f,g\rangle_{L_a^2}=\int\limits_0^1 f(\xi)\ov {g(\xi)}e^{a\xi}d\xi.
$$
Let us define the operator $L=\frac{d^2 }{d x^2}+a\frac{d }{d x}$ from $L^2_a(0,1)$ to $L^2_a(0,1)$ with $\dom L=\{h\in L^2_a(0,1):h,h' \text{ are absolutely continuous }, h''\in L^2_a(0,1) \text{ and } h(0)=h(1)=0\}$.
In the next lemma we state some properties of $L$.  
\begin{lemma}
 The operator $L=\frac{d^2 }{d x^2}+a\frac{d }{d x}$ from $L^2_a(0,1)$ to $L^2_a(0,1)$ is
a negative self-adjoint operator with a bounded inverse.
 The eigenvalues and eigenvectors of $L$ are
 $$
 \m_n=-\frac{a^2}{4}-\pi^2n^2,\quad \varphi_n(x)=e^{-\frac{ax}{2}}\sin(\pi nx),
 $$
 respectively, for $n\in\mathbb N$.
\end{lemma}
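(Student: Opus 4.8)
The key structural observation is that the weight $e^{ax}$ in the inner product is precisely the one that recasts $L$ as a Sturm--Liouville operator in self-adjoint form: since $(e^{ax}h')' = e^{ax}(h''+ah')$, we may write $Lh = e^{-ax}(e^{ax}h')'$. Rather than attacking self-adjointness and the spectrum head-on, the plan is to exploit this structure by introducing the multiplication operator $U\colon L^2_a(0,1)\to L^2(0,1)$, $(Uh)(x)=e^{ax/2}h(x)$. This $U$ is unitary exactly because the weight matches the inner product, $\langle Uh,Ug\rangle_{L^2}=\int_0^1 h\bar g\,e^{ax}\,dx=\langle h,g\rangle_{L^2_a}$, and I would transport every claimed property of $L$ back and forth through $U$.

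First I would compute the conjugated operator. Writing $h=e^{-ax/2}f$ and differentiating twice gives $h''+ah' = e^{-ax/2}\bigl(f''-\tfrac{a^2}{4}f\bigr)$, so that $ULU^{-1}f = f'' - \tfrac{a^2}{4}f$. Moreover $U$ carries $\dom L$ onto the domain $\{f: f,f' \text{ absolutely continuous},\ f''\in L^2(0,1),\ f(0)=f(1)=0\}$ of the classical Dirichlet Laplacian $L_0=\frac{d^2}{dx^2}$ on $L^2(0,1)$, because multiplication by the nonvanishing factors $e^{\pm ax/2}$ preserves the regularity conditions and the homogeneous boundary conditions $h(0)=h(1)=0$. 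Hence $L$ is unitarily equivalent to $L_0-\tfrac{a^2}{4}I$.

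It is classical that $L_0$ is a negative self-adjoint operator on $L^2(0,1)$ with eigenvalues $-\pi^2 n^2$ and orthonormal eigenfunctions proportional to $\sin(\pi n x)$, $n\in\mathbb N$. Since unitary equivalence preserves self-adjointness, negativity, and the spectrum, $L$ is self-adjoint with spectrum $\{\mu_n:n\in\mathbb N\}$ where $\mu_n=-\tfrac{a^2}{4}-\pi^2 n^2$; every such point is strictly negative and bounded away from $0$ by $\mu_1=-\tfrac{a^2}{4}-\pi^2<0$, which yields both the negativity and the bounded invertibility, with $\|L^{-1}\|\le(\pi^2+a^2/4)^{-1}$. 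Transporting the eigenfunctions back through $U^{-1}$ gives $\varphi_n = e^{-ax/2}\sin(\pi n x)$, and one may confirm $L\varphi_n=\mu_n\varphi_n$ directly by substitution into $h''+ah'=\mu h$.

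The only genuinely delicate point is the domain identity: establishing that $U(\dom L)$ is \emph{exactly} the Dirichlet Laplacian domain, so that no enlargement is hidden in passing to the adjoint, is what upgrades the easy formal symmetry — obtained from the integration-by-parts identity $\langle Lh,g\rangle_{L^2_a}=-\int_0^1 e^{ax}h'\bar g'\,dx$, which also displays the negativity — to genuine self-adjointness. I expect this domain bookkeeping to be the main obstacle; once it is in place, everything else reduces to the well-known theory of the one-dimensional Dirichlet Laplacian.
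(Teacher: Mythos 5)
Your proposal is correct and takes essentially the same approach as the paper: the paper also conjugates $L$ by the unitary multiplication operator between $L^2_a(0,1)$ and $L^2(0,1)$ (its $V\colon f\mapsto e^{-ax/2}f$ is exactly your $U^{-1}$), identifies the conjugated operator as the shifted Dirichlet Laplacian $\frac{d^2}{dx^2}-\frac{a^2}{4}$ on $H^2(0,1)\cap H^1_0(0,1)$, and transports self-adjointness, negativity, bounded invertibility, and the eigenpairs back through the unitary. Your additional remarks on the Sturm--Liouville form and the domain bookkeeping are sound but do not change the route.
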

\begin{proof}
  We define a unitary mapping $V:L^2(0,1)\to L^2_a(0,1)$ by the formula
  \begin{equation*}
    (Vf)(x)=e^{-\frac{ax}{2}}f(x), \qquad x\in(0,1).
  \end{equation*}
  Now we can consider an auxiliary operator $\wt L:L^2(0,1)\to L^2(0,1)$ defined by $\wt L= V^*LV$ with $\dom \wt L=H^2(0,1)\cap H^1_0(0,1).$
   Direct calculations yield that $\wt Lf=\frac{d^2 f }{d x^2}-\frac{a^2}{4}f$. It is well known that $\wt L$ is 
a negative self-adjoint operator with a bounded inverse.
 Hence $L$ is also a negative self-adjoint operator with a bounded inverse.
 The eigenvalues and the eigenvectors of the operator $\wt L$ are
  $$
  \mu_n=-\frac{a^2}{4}-\pi^2n^2,\quad \wt \varphi_n(x)=\sin(\pi nx) ,\qquad n\in\N.
 $$
 Since 
$\wt L= V^*LV$, 
the operators $\wt L$ and $L$ have the same eigenvalues
 and the eigenvectors of $L$ are given by the formula $\varphi_n(x)=(V\wt \varphi_n)(x)=e^{-\frac{ax}{2}}\sin(\pi nx)$.
  \end{proof}

Now we consider weakly damped Webster's equation
\begin{equation}\label{web_weak}
  \left\{
  \begin{array}{ll}
    w_{tt}(t,x)
    - w_{xx}(t,x)-aw_x(t,x)
    +d(x)\int_0^1w_t(t,\xi)d(\xi)e^{a\xi} d\xi =0    \\
    w(t,0)=w(t,1)=0  \\
    w(0,x)=w_0(x),\quad w_t(0,x)=w_1(x),
  \end{array}\right.
\end{equation}
where the damping coefficient is $d\in L^2_a(0,1)$.
This equation is of the form~\eqref{GWE} on $X_0=L_a^2(0,1)$ with  
  $L$ defined above
  and with a rank one operator $D_0 = d(\cdot)\in \Lin(\dC,L_a^2(0,1))$ and
$D_0^*=\langle \cdot,d\rangle_{L^2_a}$.

The polynomial stability of the weakly damped Webster's equation can be analyzed using~\cite[Thm.~6.3]{Pau17b}.
The following result in particular shows that~\eqref{web_weak} is polynomially stable for the particular choice of damping $d(x)=1-x$.

  \begin{proposition}\label{CorWeb}
    The weakly damped Webster's equation \eqref{web_weak} with the damping function $d(x)=1-x$ is polynomially stable with $\a=2$.
  \end{proposition}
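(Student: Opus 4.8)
The plan is to reduce the polynomial stability of the weakly damped Webster's equation to a verifiable spectral estimate, and to do this via the framework already set up in the excerpt. The equation~\eqref{web_weak} is of the abstract form~\eqref{GWE} with the self-adjoint operator $L$ described in the preceding lemma and the rank-one damping $D_0 = d(\cdot)\in\Lin(\dC,L^2_a(0,1))$, $D_0^\ast = \langle\cdot,d\rangle_{L^2_a}$. Since $L$ is negative, self-adjoint, and boundedly invertible, the generator $A = A_0 - DD^\ast$ produces a contraction semigroup by Lumer--Phillips, and the eigenstructure of $A_0$ is exactly as in Remark~\ref{rem:ABBLdiag}: the eigenvalues are $\lambda_n = \sign(n)i\sqrt{\mu_n}$ with $\mu_n = \tfrac{a^2}{4}+\pi^2 n^2$ and eigenvectors built from $\varphi_n(x) = e^{-ax/2}\sin(\pi n x)$. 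The target rate $\alpha=2$ corresponds, via Theorem~\ref{thm:kappaestimate}, to producing functions $\WPgamma,\delta$ with $\WPgamma(s)^{-2}\delta(s)^{-2} \leq M_0(1+s^2)$, i.e. to showing that the resolvent grows no faster than $\norm{R(is,A)} = O(s^2)$ along the imaginary axis.

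First I would invoke the observability/resolvent criterion: polynomial stability with $\alpha=2$ is equivalent (for these skew-adjoint-plus-damping generators, as used throughout the excerpt) to a bound $\norm{R(is,A)} = O(|s|^{2})$, which by Theorem~\ref{thm:kappaestimate} follows from the spectral lower bound~\eqref{eq:WPestimate}. Because $D_0$ has rank one, the form of Remark~\ref{rem:ABBLdiag} is the natural tool: choosing $\delta(s)$ so that each interval $(s-\delta(s),s+\delta(s))$ traps at most one $\sqrt{\mu_n}$, the condition reduces to a lower bound on $|D_0^\ast \varphi_n| = |\langle \varphi_n, d\rangle_{L^2_a}|$. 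So the crux is to compute, for $d(x)=1-x$, the Fourier-type coefficients
\begin{equation*}
  \langle \varphi_n, d\rangle_{L^2_a} = \int_0^1 e^{-ax/2}\sin(\pi n x)\,(1-x)\,e^{ax}\,dx = \int_0^1 (1-x)\,e^{ax/2}\sin(\pi n x)\,dx,
\end{equation*}
and to show these decay no faster than $O(1/n)$, so that $\norm{D_0^\ast\varphi_n}\geq c/\sqrt{\mu_n}\sim c'/n$ for some $c'>0$.

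The natural route is to cite~\cite[Thm.~6.3]{Pau17b}, mentioned just before the proposition, which presumably gives polynomial stability of rank-one (or scalar-damped) second-order systems under precisely such a decay hypothesis on the damping coefficients $\langle \varphi_n, d\rangle$. The plan is therefore to verify the hypotheses of that theorem: namely that $\langle\varphi_n,d\rangle_{L^2_a}$ is bounded below by $c/n$ (equivalently that it does not vanish and decays exactly like $1/n$), and then read off $\alpha = 2$ from the eigenvalue gaps $\sqrt{\mu_{n+1}}-\sqrt{\mu_n} = O(1)$ combined with this coefficient decay. The main obstacle will be the explicit integral computation: one must integrate by parts to extract the asymptotics of $\int_0^1 (1-x)e^{ax/2}\sin(\pi n x)\,dx$ as $n\to\infty$, identify the leading term (the endpoint contribution at $x=0$, where $1-x=1$, should give the dominant $O(1/n)$ behaviour while the smooth interior contributes $O(1/n^2)$), and confirm the leading coefficient is nonzero for every $a\geq 0$. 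Establishing a uniform lower bound $|\langle\varphi_n,d\rangle_{L^2_a}| \geq c/n$ valid for all $n$ — not merely asymptotically — is the delicate point, since one must rule out accidental small values at finitely many low indices; this is handled by the nonvanishing of each individual coefficient together with the asymptotic estimate. Once the $O(1/n)$ decay is in hand, feeding it into~\cite[Thm.~6.3]{Pau17b} (or equivalently into Remark~\ref{rem:ABBLdiag} with $\WPgamma(s)\sim 1/s$, $\delta(s)$ bounded below) yields $\WPgamma(s)^{-2}\delta(s)^{-2} = O(s^2)$ and hence $\alpha=2$.
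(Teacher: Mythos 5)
Your proposal is correct and follows essentially the same route as the paper's own proof: the paper likewise reduces the claim to a lower bound $|D^\ast\psi_n| = |\langle \varphi_{|n|}, 1-x\rangle_{L^2_a}| \geq c/|\lambda_n|$ (evaluating the very integral you identify, in closed form, with leading term $\pi|n|/(\tfrac{a^2}{4}+\pi^2 n^2) \sim 1/(\pi |n|)$), then invokes \cite[Thm.~6.3]{Pau17b} to get $\|R(is,A)\| = O(s^2)$ and concludes via \cite[Thm.~2.4]{BorTom10}. Your integration-by-parts asymptotics is an equivalent substitute for the paper's exact computation of the coefficients.
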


  \begin{proof}
    We can write
    \begin{align*}
      A = 
      \begin{pmatrix}
	0&I\\L&-D_0D_0^\ast
      \end{pmatrix}
      =
      \begin{pmatrix}
	0&I\\L&0
      \end{pmatrix}
      -
      \begin{pmatrix}
	0\\D_0
      \end{pmatrix}
      \begin{pmatrix}
	0&D_0^\ast
      \end{pmatrix}
      =: A_0-DD^\ast
    \end{align*}
    with $\dom A_0 = \dom A$ and $D\in \Lin(\dC,X)$.
    We will use~\cite[Thm. 6.3]{Pau17b} to show that $\norm{R(is,A)}\leq M(1+s^2)$ for some $M>0$.
    To this end, we need to estimate the quantities $|D^\ast \psi_n|$ from below, where $\psi_n$ are the normalized eigenvectors of $A_0$. 
Since $L$ has eigenvalues $\mu_n=-\frac{a^2}{4}-\pi^2n^2$ with the corresponding eigenvectors $\varphi_n(x)=e^{-\frac{ax}{2}}\sin(\pi nx)$ for $n\in\mathbb N$, the
eigenvectors $\psi_n$ and the
corresponding eigenvalues $\l_n$ of $A_0$ are given by 
$$
\l_{n}=\sign(n) i\sqrt{\frac{a^2}{4}+\pi^2n^2}, \qquad \mbox{and} \qquad
\psi_n(x)=\frac{1}{\l_n}\begin{pmatrix}
  \varphi_{|n|}(x)\\
  \l_n \varphi_{|n|}(x)
\end{pmatrix}
  , \quad n\in\mathbb Z\backslash\{0\}.
$$
For any $n\in \mathbb{Z}\setminus \{0\}$ we thus have
    \begin{equation*}
      \begin{split}
	|D^*\psi_n|&=\left|\langle\varphi_{|n|}(x),
	1-x\rangle_{L_a^2}\right|
	=\left|\int\limits_0^1 e^{ax}e^{-\frac{a}{2}x}\sin(\pi nx)(1-x)dx\right|\\
	&=\left|\frac{\pi |n|}{\frac{a^2}{4}+\pi^2n^2}-\frac{a\pi |n|\left(e^{a/2}(-1)^{|n|}-1\right)}{\left(\frac{a^2}{4}+\pi^2n^2\right)^2}\right|
	\geq \frac{c}{|\l_n|}
      \end{split}
    \end{equation*}
    for some constant $c>0$ and for all sufficiently large $|n|$.
    By~\cite[Thm. 6.3]{Pau17b} we have
    $\|R(is,A)\|=O( s^2)$ for $|s|$ large, and thus~\cite[Thm. 2.4]{BorTom10} implies that the semigroup generated by $A=A_0-DD^\ast$ is polynomially stable with $\a=2$.
  \end{proof}

\begin{remark}
Note that if in this weakly damped Webster's equation one takes $a=0$ then we get a weakly damped wave equation on the interval $(0,1)$ with the same damping coefficient $d(x)=1-x$ and such equation is also polynomially stable with $\a= 2$.
\end{remark}

We consider the weakly damped Webster's equation with additional perturbing terms of the form
\begin{equation}\label{pweb_weak}
    \begin{split}
      &w_{tt}(t,x) - w_{xx}(t,x)-aw_x(t,x)+d(x)\int^1_0w_t(t,\xi) d(\xi)d\xi\\
      &=b_2(x)\int^1_0\left( w(t,\xi) c_1(\xi)+w_t(t,\xi)c_2(\xi)\right)e^{a\xi}d\xi,
      \end{split}
    \end{equation}
where $b_2,c_2\in L^2(0,1)$ and $c_1\in H_{-1/2}(L)$.
The following theorem presents conditions for the polynomial stability of the perturbed Webster's equation~\eqref{pweb_weak}. 
The spaces $H_{\theta}(L)$ and the corresponding norms are defined as in~\eqref{E:Hspace}.
The above perturbations correspond to rank one perturbation operators in the abstract wave equation. Addition of multiple perturbation terms can be treated similarly as in the case of the two-dimensional wave equation in Section~\ref{sec:2Dwave}.

\begin{theorem}\label{T:web_pert}
  Assume that the weakly damped Webster's equation \eqref{web_weak} is polynomially stable with $\alpha\leq2$, that $0\leq\beta,\gamma\leq 1$ such that $\beta+\gamma\geq\alpha$, and that $\kappa>0$ is as in Theorem~\textup{\ref{T:1.1}}.
    If
    $ b_2\in H_{\beta/2}(L)$, $c_1\in H_{{(\gamma-1)/2}}(L)$, $c_2\in H_{\gamma/2}(L)$
satisfy
   \begin{equation*}
    \left\|b_2\right\|_{H_{\beta/2}}<\frac{\kappa}{K_\beta},\quad \left\|c_1\right\|^2_{H_{(\gamma-1)/2}}+\left\|c_2\right\|^2_{H_{\gamma/2}}<\frac{\kappa^2}{K_\gamma^2},
 \end{equation*}
  then the perturbed Webster's equation~\eqref{pweb_weak} is polynomially stable with the same $\alpha$.
  Here $K_\t = e^{\frac{1}{2}{\pi^2\t(1-\t)}} M^\t$, $\t\in[0,1]$, and 
$M=1+ \|d\|_{L_a^2}(\frac{a^2}{4}+\pi^2)^{-1/2}.$
    \end{theorem}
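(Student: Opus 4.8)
The plan is to recognise that the perturbed Webster's equation~\eqref{pweb_weak} is an instance of the abstract perturbed system~\eqref{PGWE2} on $X_0 = L_a^2(0,1)$, with $L$ and $D_0 = d$ as above, and to reduce the claim to a direct application of Corollary~\ref{C:mainth}, exactly as Theorem~\ref{T:rank1} is deduced in the two-dimensional case. First I would write the perturbation in the form $B_2(C_1 w + C_2 w_t)$ with $Y = \dC$, taking
\[
  B_2 = b_2, \qquad C_1 = \langle\,\cdot\,,c_1\rangle_{H_{1/2},H_{-1/2}}, \qquad C_2 = \langle\,\cdot\,,c_2\rangle_{L_a^2},
\]
so that $B_2$ is identified with $b_2$ and the adjoints are $C_1^\ast = c_1$, $C_2^\ast = c_2$. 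With this identification the smoothness hypotheses $b_2\in H_{\beta/2}(L)$, $c_1\in H_{(\gamma-1)/2}(L)$, $c_2\in H_{\gamma/2}(L)$ together with the two smallness inequalities are precisely the hypotheses of Corollary~\ref{C:mainth} for the resulting rank-one operators $B$ and $C$.

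Next I would verify that $L$ is diagonalisable in the sense required by the corollary, so that the spaces $H_\theta(L)$ and their norms are the ones in~\eqref{E:Hspace}. Here the eigenfunctions are $\varphi_n(x) = e^{-ax/2}\sin(\pi n x)$, and in the weighted inner product the weight $e^{ax}$ cancels the factor $e^{-ax}$ coming from $|\varphi_n|^2$, giving $\langle\varphi_n,\varphi_m\rangle_{L_a^2} = \int_0^1 \sin(\pi n x)\sin(\pi m x)\,dx = \tfrac{1}{2}\delta_{nm}$; hence $\{\sqrt2\,\varphi_n\}_{n\geq1}$ is an orthonormal basis of $L_a^2(0,1)$ and $L$ is diagonalisable.

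It then remains to evaluate the constant $M = 1 + \|D_0\|^2\|(-L)^{-1/2}\|$ that enters $K_\theta$. Since $-L$ has eigenvalues $\tfrac{a^2}{4}+\pi^2 n^2$ with least value $\tfrac{a^2}{4}+\pi^2$, one has $\|(-L)^{-1/2}\| = \big(\tfrac{a^2}{4}+\pi^2\big)^{-1/2}$, while $D_0\colon\dC\to L_a^2$ acts by $s\mapsto s\,d$, so $\|D_0\| = \|d\|_{L_a^2}$; substituting these two values gives the constant $M$ in the statement. With $M$, $K_\beta$, $K_\gamma$ thereby fixed and the smallness conditions assumed, Corollary~\ref{C:mainth} applies and yields that the semigroup generated by $A + BC$ is polynomially stable with the same $\alpha$, which is exactly the asserted stability of~\eqref{pweb_weak}.

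The only step I expect to be genuinely non-routine is the diagonalisability verification: confirming that it is precisely the weighted inner product $\langle\cdot,\cdot\rangle_{L_a^2}$ that renders the otherwise non-orthogonal-looking family $\varphi_n$ orthogonal, which is what makes the sharper Corollary~\ref{C:mainth} (rather than only Theorem~\ref{T:1.2}) legitimately applicable. Everything else is a transcription of the abstract estimates together with the two elementary norm computations.
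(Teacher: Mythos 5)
Your proposal is correct and follows essentially the same route as the paper: the paper's proof likewise identifies the perturbation as a rank-one instance of~\eqref{PGWE2} with $B_2=b_2$, $C_1^\ast=c_1$, $C_2^\ast=c_2$, computes $\norm{D_0}=\norm{d}_{L_a^2}$ and $\norm{(-L)^{-1/2}}=(a^2/4+\pi^2)^{-1/2}$ from the smallest eigenvalue $-\mu_1$, and invokes Corollary~\ref{C:mainth}. Your explicit check that $\{\sqrt{2}\,\varphi_n\}$ is orthonormal in $L_a^2(0,1)$ is a welcome detail the paper leaves implicit (it follows from the unitary equivalence $\wt L=V^\ast L V$ in the earlier lemma), and your rank-one operators are automatically Hilbert--Schmidt, so the corollary's hypotheses are indeed fully met.
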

\begin{proof}
  The perturbed system is of the form~\eqref{PGWE}  with
\begin{align*}
  B_2 = b_2\in L^2(\Omega), \qquad C_1 = \langle\cdot,c_1\rangle_{H_{1/2},H_{-1/2}}, \qquad \mbox{and} \qquad C_2 = \langle\cdot,c_2\rangle_{L^2},
\end{align*}
where $\langle\cdot,\cdot\rangle_{H_{1/2},H_{-1/2}}$ denotes the dual pairing between
$H_{1/2}(L)$
and $H_{-1/2}(L)$.
We have $B_2=b_2$, $C_1^\ast = c_1$ and $C_2^\ast = c_2$, and
 $\norm{D_0}=\norm{d}_{L_a^2}$. Since $-L$ is positive and its smallest eigenvalue is given by $-\mu_1 = a^2/4+\pi^2$, we also have
 $\norm{(-L)^{-1/2}}=(-\mu_1)^{-1/2} = (a^2/4+\pi^2)^{-1/2}$.
Thus the claim follows from Corollary~\ref{C:mainth}.
\end{proof}

As shown in Proposition~\ref{CorWeb} the Webster's equation with the damping function $d(x)=1-x$ is polynomially stable with $\a=2$. 
Since $\norm{d}_{L_a^2}^2=2a^{-3}(e^a-1-a-a^2/2)$ for this $d$, for the choices $\beta=\gamma=1$ Theorem~\ref{T:web_pert} has the following form.

\begin{corollary}\label{webs11}
  Let $d(x)=1-x$.
  If $\kappa>0$ is as in Theorem~\textup{\ref{T:1.1}} with $\beta=\gamma=1$
and if
    $ b_2,c_2\in H_{1/2}(L)$, $c_1\in L_a^2(0,1)$
satisfy
   \begin{equation*}
    \left\|b_2\right\|_{H_{1/2}}<\frac{\kappa}{M},\quad \left\|c_1\right\|^2_{L_a^2}+\left\|c_2\right\|^2_{H_{1/2}}<\frac{\kappa^2}{M^2},
 \end{equation*}
where  
$M=1+ 2a^{-3}(e^a-1-a-a^2/2)(\frac{a^2}{4}+\pi^2)^{-1/2},$
  then the perturbed Webster's equation~\eqref{pweb_weak} is polynomially stable with $\alpha=2$.
  
    \end{corollary}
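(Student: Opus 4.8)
The plan is to deduce Corollary~\ref{webs11} as a direct specialization of Theorem~\ref{T:web_pert}, so the real work is simply verifying that the hypotheses of the more general theorem reduce to the stated ones under the choices $d(x)=1-x$ and $\beta=\gamma=1$. By Proposition~\ref{CorWeb} we already know that the weakly damped Webster's equation with this particular $d$ is polynomially stable with $\alpha=2$, so the polynomial stability hypothesis of Theorem~\ref{T:web_pert} is satisfied, and the constraint $\beta+\gamma\geq\alpha$ becomes $1+1\geq 2$, which holds with equality. Thus all the standing assumptions of Theorem~\ref{T:web_pert} are met.

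First I would substitute $\beta=\gamma=1$ into the smoothness constants appearing in Theorem~\ref{T:web_pert}. Since $K_\theta = e^{\frac{1}{2}\pi^2\theta(1-\theta)} M^\theta$, at $\theta=1$ the exponential factor is $e^{0}=1$, so $K_1 = M$; this explains why both denominators $K_\beta$ and $K_\gamma$ collapse to $M$ in the statement. Next I would identify the fractional-domain spaces at these exponents: $H_{\beta/2}(L)=H_{1/2}(L)$ for $b_2$, $H_{(\gamma-1)/2}(L)=H_0(L)=L_a^2(0,1)$ for $c_1$, and $H_{\gamma/2}(L)=H_{1/2}(L)$ for $c_2$. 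These are precisely the regularity requirements $b_2,c_2\in H_{1/2}(L)$ and $c_1\in L_a^2(0,1)$ quoted in the corollary, and the smallness conditions~\eqref{E:rank2}-type inequalities become $\|b_2\|_{H_{1/2}}<\kappa/M$ and $\|c_1\|_{L_a^2}^2+\|c_2\|_{H_{1/2}}^2<\kappa^2/M^2$.

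The only remaining point is to make the constant $M$ explicit for $d(x)=1-x$. Here I would use the general formula $M=1+\|d\|_{L_a^2}(\tfrac{a^2}{4}+\pi^2)^{-1/2}$ from Theorem~\ref{T:web_pert} together with the stated identity $\|d\|_{L_a^2}^2 = 2a^{-3}(e^a-1-a-a^2/2)$, which follows from a routine integration of $(1-\xi)^2 e^{a\xi}$ over $(0,1)$. Substituting gives the claimed value $M=1+2a^{-3}(e^a-1-a-a^2/2)(\tfrac{a^2}{4}+\pi^2)^{-1/2}$. With all hypotheses of Theorem~\ref{T:web_pert} verified and the constant computed, the polynomial stability of~\eqref{pweb_weak} with $\alpha=2$ follows immediately.

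I do not anticipate a genuine obstacle, since the corollary is essentially a bookkeeping specialization; the one place warranting care is the $L_a^2$-norm computation of $d(x)=1-x$, where the weight $e^{a\xi}$ must be handled correctly (and one should double-check the edge case $a\to 0^+$, in which the expression $2a^{-3}(e^a-1-a-a^2/2)$ tends to the expected limit $\|1-x\|_{L^2}^2=1/3$, confirming the formula is dimensionally and analytically consistent).
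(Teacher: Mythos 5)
Your proposal is correct and is essentially identical to the paper's own (implicit) derivation: the corollary is obtained by specializing Theorem~\ref{T:web_pert} (whose stability hypothesis is supplied by Proposition~\ref{CorWeb}) to $\beta=\gamma=1$, noting $K_1=e^{0}M=M$, identifying the spaces $H_{1/2}(L)$ and $H_{0}(L)=L_a^2(0,1)$, and evaluating $\|d\|_{L_a^2}^2=2a^{-3}(e^a-1-a-a^2/2)$, whose small-$a$ consistency check you rightly perform. One bookkeeping caution: substituting literally into the constant $M=1+\|d\|_{L_a^2}\left(\frac{a^2}{4}+\pi^2\right)^{-1/2}$ as displayed in Theorem~\ref{T:web_pert} would yield $\sqrt{2a^{-3}(e^a-1-a-a^2/2)}$ rather than the squared quantity appearing in Corollary~\ref{webs11}; the squared norm is the correct one, since the underlying constant in Theorem~\ref{T:1.2} and Corollary~\ref{C:mainth} is $M=1+\|D_0\|^2\|(-L)^{-1/2}\|$ with $\|D_0\|=\|d\|_{L_a^2}$, so the first power in the statement of Theorem~\ref{T:web_pert} is evidently a typo that your final formula (like the paper's) silently corrects.
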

\begin{example}
We use Theorem~\ref{thm:kappaestimate} for computing an explicit numerical value of $\kappa$ for the case $d(x) = 1-x$ and $a = 2$. 
To this end, we need to find functions $\eta(\cdot)$ and $\delta(\cdot)$ such that the condition~\eqref{eq:WPestimate} in Theorem~\ref{thm:kappaestimate} is satisfied.
For $a=2$ the eigenvalues and the corresponding eigenvectors of $A_0$ are $\l_{n}=\sign(n) i\sqrt{1+\pi^2n^2}$, 
		$$
		\psi_n(x)=\frac{1}{\l_n}\begin{pmatrix}
  \varphi_{|n|}(x)\\
  \l_n \varphi_{|n|}(x)
\end{pmatrix}
  , \quad \text{where }\varphi_{|n|}(x)=e^{-x}\sin(\pi |n|x), ~n\in\mathbb Z\backslash\{0\}.
		$$
		For all $n\in \mathbb{N}$ (using the inequality $\sqrt{x + y}\leq \sqrt{x}+ \sqrt{y}$)
		\begin{equation*}
		\begin{split}
			\dist(\l_n,\l_{n+1}) &= \dist(\l_{-n},\l_{-(n+1)}) = \sqrt{1+\pi^2(n+1)^2}-\sqrt{1+\pi^2n^2}\\
			&= \frac{\pi^2(2n+1)}{\sqrt{1+\pi^2(n+1)^2}+\sqrt{1+\pi^2n^2}} \geq  \frac{\pi^2(2n+1)}{2 + \pi(2n+1)} \geq \frac{3\pi^2}{2 + 3\pi},
		\end{split}
	\end{equation*}
	since $f(x) = \frac{\pi^2x}{2 + \pi x}$ is increasing for $x\in (1,\infty)$.
	If we choose $\delta(s) \equiv \delta_0 = \frac{\pi^2}{a + 3\pi}$, then every interval $(i(s-\delta_0),i(s+\delta_0))$ contains at most one eigenvalue and $\ran P_{(s-\delta_0,s+\delta_0)}$ consists of the corresponding eigenvector. Similar computations as in the proof of Proposition~\ref{CorWeb} then show that
		\begin{equation*}
      \begin{split}
	|D^*\psi_n|&=\left|\langle\varphi_{|n|},
	d\rangle_{L_a^2}\right|
	=\frac{\pi |n|}{1+\pi^2n^2}\left(1-\frac{2\left(e(-1)^{|n|}-1\right)}{\left(1+\pi^2n^2\right)^2}\right)\\
	&\geq \frac{c}{|\l_n|}\geq \frac{c}{s+\delta_0} = \eta(s)\|\psi_n\|,
      \end{split}
    \end{equation*}
	where $\eta(s) = \frac{c}{s+\delta_0} $ with the $c>0$ such that
		$$
		c \leq \inf\left\{ \frac{\pi |n|}{\sqrt{1+\pi^2n^2}}\left(1-\frac{2\left(e(-1)^{|n|}-1\right)}{\left(1+\pi^2n^2\right)^2}\right)\right\}.
		$$
		To find a suitable $c>0$, let us denote
		$$
		F(n) = \frac{\pi |n|}{\sqrt{1+\pi^2n^2}}\left(1-\frac{2\left(e(-1)^{|n|}-1\right)}{\left(1+\pi^2n^2\right)^2}\right)
		$$
		and 
		$$
		G(n) = \frac{\pi |n|}{\sqrt{1+\pi^2n^2}}\left(1-\frac{2\left(e-1\right)}{\left(1+\pi^2n^2\right)^2}\right).
		$$
		It is obvious that $F(n)\geq G(n)$ for $n\geq 2$. The values $G(n)$ for $n\geq 2$ are increasing and $G(n)\to 1$ as $n\to \infty$ and therefore $\min\limits_{n\geq 2} G(n) = G(2)$.
		Hence we can choose
		$$
		c = \min\{F(1), G(2)\} = \frac{2\pi }{\sqrt{1+4\pi^2}}\left(1-\frac{2\left(e-1\right)}{\left(1+4\pi^2\right)^2}\right).
		$$
		Finally, the maximum of $\eta(s)$ when $s\geq 0$ is $\eta_0 = c/\delta_0$.
		
		In the next step we calculate $M_R$. To this end, we need also $\|D\|$ which is 
		$$
		\|D\| = \norm{d}_{L_2^2} = \frac{\sqrt{e^2-5}}{2}.
		$$
		We can now use Matlab to compute $M_R = 5.451$.
		
		Now we will find the constant $M_0>0$. A direct estimate using $(x+y)^2\leq 2(x^2+y^2)$ and $\delta_0<1$ shows that
		$$
		\eta(s)^{-2}\delta(s)^{-2} = \frac{(s+\delta_0)^2}{c^2\delta_0^2}\leq \frac{2(s^2+\delta_0^2)}{c^2\delta_0^2}\leq \frac{2}{c^2\delta_0^2}(s^2+1) = M_0(s^2+1),
		$$
		with $M_0=\frac{2}{c^2\delta_0^2}$. 
		To compute $M_C>0$, we also need an estimate for $\|A^{-1}\|$. We have 
		\begin{equation*}
		\begin{split}
    \|A^{-1}\|&=\left\|(A_0-DD^*)^{-1}\right\|\leq\left\|(I-DD^*A_0^{-1})^{-1}\right\|\|A_0^{-1}\|\\
    &=\left\|I+DD^*A_0^{-1}\right\|\|A_0^{-1}\|\leq \left(1+\norm{d}_{L_2^2}^2 \norm{(-L)^{-1/2}}\right)\norm{(-L)^{-1/2}}\\ 
		&= \left(1 + \frac{e^2-5}{4\sqrt{1+\pi^2}}\right)\frac{1}{\sqrt{1+\pi^2}}.
  \end{split}
 \end{equation*}
 If we take $s_0 = 2.8$ in the formula for $M_C$, we obtain $M_C = 17.0664$. This way, we finally see that $\kappa>0$ in Theorem~\ref{T:1.1} can take any value such that   
 $\kappa < \frac{1}{\sqrt{2M_C}}= 0.1712$. 

Now we are able to give explicit upper bounds for the norms of $b_2$, $c_1$, and $c_2$ for the preserving of polynomial stability. From Corollary \ref{webs11} we have that $ b_2,c_2\in H_{1/2}(L)$, $c_1\in L_a^2(0,1)$
satisfy
   \begin{equation*}
    \left\|b_2\right\|_{H_{1/2}}<\frac{\kappa}{M} = 0.1449,\quad \sqrt{\left\|c_1\right\|^2_{L_a^2}+\left\|c_2\right\|^2_{H_{1/2}}}<\frac{\kappa}{M}= 0.1449,
 \end{equation*}
where $M = 1 + \frac{e^2-5}{4\sqrt{1+\pi^2}}$, then the perturbed Webster's equation~\eqref{pweb_weak} is polynomially stable with $\alpha=2$.
\end{example}

\section{Wave equation with an acoustic boundary condition}\label{S:ABC}

In this section we consider a one-dimensional wave equation with an "acoustic boundary condition" on the interval $\Omega=(0,1)$,
\begin{equation}\label{ABcon}
  \left\{
  \begin{array}{ll}
    w_{tt}(t,x)
    =w_{xx}(t,x) \text{ in } (0,\infty)\times \Omega    \\
    a_{tt}(t)=-ka(t)-da_t(t)-w_t(1,t)\\
    w_x(t,1)=a_t(t),\quad w_x(t,0)=0,\\
    w(0,x)=w_0(x),\quad w_t(0,x)=w_1(x), \quad a(0)=a_0, \quad a_t(0)=a_1
  \end{array}\right.
\end{equation}
with $k,d>0$~\cite[Sec.~6.1]{AbbNic13}.
The spectral properties and polynomial stability of differential equations of this form (also on multidimensional spatial domains) have been studied in detail in~\cite{Bea76,RivQin03,AbbNic13,AbbNic15b}. In particular, 
it was shown in \cite[Thm.~1.3]{RivQin03} 
 that the energy of the classical solutions of~\eqref{ABcon} decays at a rational rate, and the optimality of this decay rate was proved in~\cite[Sec.~6.1]{AbbNic13}.
This model is not of the form~\eqref{GWE}, but the preservation of its polynomial stability can be studied using Theorem~\ref{T:1.1}.

Equation~\eqref{ABcon} can be formulated as an abstract Cauchy problem 
with state $u(t) = ( w_x(t,\cdot), w_t(t,\cdot), a(t), a_t(t))^\top$ 
on the Hilbert space 
$
\cH=L^2(0,1)\times L^2(0,1)\times \mathbb{C}^2
$
 with inner product defined as
$$
\langle u,v\rangle_\cH=\left\langle u_1,v_1\right\rangle_{L^2}+\left\langle u_2,v_2\right\rangle_{L^2}+ku_3\ov{v_3}+u_4 \ov{v_4}
$$
for all $u=(u_1(\cdot),u_2(\cdot),u_3,u_4)^\top$, $v=(v_1(\cdot),v_2(\cdot),v_3,v_4)^\top\in\cH$.
In this situation the semigroup generator is defined as
\begin{equation*}
  A=
  \begin{pmatrix}
    0&\partial _x&0&0\\
    \partial_x&0&0&0\\
    0&0&0&1\\
    -C_0&0&-k&-d
  \end{pmatrix}
  , \qquad
  C_0f=f(1) \quad \mbox{for} ~ f\in H^1(0,1), 
\end{equation*}
with domain
$$
\dom A=\left\{
(
  u_1(\cdot),
  u_2(\cdot),
  u_3,
  u_4
)^\top
\in (H^1(0,1))^2 \times\mathbb C^2:u_2(0)=0,u_2(1)=u_4
\right\}.
$$
The operator $A$ generates a contraction semigroup on $\cH$, and it was shown in \cite[Thm.~1.3]{RivQin03} 
(see also~\cite[Sec.~6.1]{AbbNic13},~\cite[Sec.~4]{PauSOTA18}) that this semigroup is polynomially stable with $\alpha=2$.
In the context of the wave equation~\eqref{ABcon} this means that there exists a constant $\Mpert>0$ such that for all initial conditions $w_0,w_1,a_0,a_1$ such that $(w_0',w_1,a_0,a_1)^\top\in \Dom A $
the solutions of~\eqref{ABcon} satisfy
\begin{equation}
  \label{ABcondecay}
  \begin{split}
    &\norm{w_x(t,\cdot)}_{L^2}^2 +  
    \norm{w_t(t,\cdot)}_{L^2}^2 +  
     |a(t)|^2 + |a_t(t)|^2 \\
    &\hspace{4cm}\leq \frac{\Mpert}{t} \left( \norm{w_0''}_{L^2}^2 + \norm{w_1'}_{L^2} + k |a_0|^2 + |a_1|^2 \right)
  \end{split}
\end{equation}
  for all $t>0$.

We can now study the stability of perturbed wave equations of the form
\begin{equation}\label{ABconPer}
  \left\{
  \begin{array}{ll}
    w_{tt}(x,t)
    =w_{xx}(x,t)+b_2(x)\int_0^1 (w_t(\xi,t)\ov{c_1(\xi)}+ w_x(\xi,t)\ov{c_2(\xi)})d\xi \\[1ex]
      \hspace{1.8cm}+b_2(x)(ka(t)\ov{c_3}+a_t(t)\ov{c_4})\\[1ex]
    a_{tt}(t)=-ka(t)-da_t(t)-w_t(1,t)\\
    w_x(t,1)=a_t(t),\quad w_x(t,0)=0,\\
    w(0,x)=w_0(x),\quad w_t(0,x)=w_1(x), \quad a(0)=a_0, \quad a_t(0)=a_1
  \end{array}\right.
\end{equation}
where $b_2,c_1,c_2\in L^2(0,1)$ and $c_3,c_4\in\dC$.
The following two theorems introduce conditions for the polynomial stability of~\eqref{ABconPer}.
\begin{theorem}
  \label{T:ABcon1}
Assume $\kappa>0$ is as in Theorem~\textup{\ref{T:1.1}} with $\beta=\gamma=1$.
If $b_2\in H^1_0(0,1)$, $c_1,c_2\in H^1(0,1)$, and $c_4=0$ satisfy
   \begin{align*}
     \|b_2'\|_{L^2}<\kappa,\quad \mbox{and} \quad
4\norm{c_1}_{H^1}^2 + \norm{c_2'}_{L^2}^2 +  3k^2|c_3|^2<\kappa^2,
   \end{align*}
   then the perturbed equation~\eqref{ABconPer} is polynomially stable with $\a=2$.
For such perturbations there exists $\Mpert>0$ such that the solutions of~\eqref{ABconPer} corresponding to 
initial conditions $w_0,w_1,a_0,a_1$ such that $(w_0',w_1,a_0,a_1)^\top\in \Dom A $
 satisfy~\eqref{ABcondecay} for all $t>0$.
  \end{theorem}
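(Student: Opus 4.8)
The plan is to realise the perturbation in~\eqref{ABconPer} as a bounded rank-one perturbation $A+BC$ of the generator $A$ and to apply Theorem~\ref{T:1.1} with the choice $\beta=\gamma=1$, so that $\beta+\gamma=2=\alpha$. Since the profile $b_2$ multiplies a single scalar read-out of the state, I would take $Y=\dC$, let $B\in\sL(\dC,\cH)$ insert $b_2$ into the component carrying the $w_{tt}$--equation, and let $C\in\sL(\cH,\dC)$ be the functional assembling the terms weighted by $c_1,c_2,c_3,c_4$; after accounting for the weighted inner product on $\cH$ this gives $C^\ast 1=(c_1,c_2,c_3,c_4)^\top$ with $c_4=0$. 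Because $Y$ is one--dimensional, the operators $(-A)B$ and $(-A^\ast)C^\ast$ are automatically Hilbert--Schmidt, so the only hypotheses of Theorem~\ref{T:1.1} left to verify are the inclusions $\ran B\subset\dom A$ and $\ran C^\ast\subset\dom A^\ast$ together with the two bounds $\|AB\|<\kappa$ and $\|A^\ast C^\ast\|<\kappa$ from~\eqref{E:L}.

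The first substantial step is to compute $A^\ast$ and its domain by integration by parts in the inner product of $\cH$. I expect $\dom A^\ast$ to carry the same boundary relations $v_2(0)=0$ and $v_2(1)=v_4$ as $\dom A$, with $A^\ast v=(-v_2',-v_1',-v_4,\,v_1(1)+kv_3-dv_4)^\top$. Granting this, $\ran B\subset\dom A$ holds as soon as $b_2\in H^1_0(0,1)$, the vanishing of $b_2$ at the endpoint making the scalar ($a_t$) component of $AB$ disappear and leaving $\|AB\|=\|b_2'\|_{L^2}$, which is exactly the first hypothesis. The role of the assumption $c_4=0$ is to make $C^\ast 1$ satisfy the boundary relation $v_2(1)=v_4$ of $\dom A^\ast$ (forcing the relevant endpoint value to vanish), so that $C^\ast 1\in\dom A^\ast$ holds with $c_2\in H^1_0(0,1)$ and $c_1\in H^1(0,1)$. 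A direct evaluation then yields $\|A^\ast C^\ast\|^2=\|c_1'\|_{L^2}^2+\|c_2'\|_{L^2}^2+|c_1(1)+kc_3|^2$.

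The crux, and the step I expect to require the most care, is to absorb the boundary evaluation $|c_1(1)+kc_3|^2$ into the hypotheses using the one--dimensional Sobolev trace estimate $|f(1)|^2\le 2\|f\|_{L^2}^2+\|f'\|_{L^2}^2$ for $f\in H^1(0,1)$. Combining this with $|c_1(1)+kc_3|^2\le 2|c_1(1)|^2+2k^2|c_3|^2$ gives $\|A^\ast C^\ast\|^2\le 4\|c_1\|_{L^2}^2+3\|c_1'\|_{L^2}^2+\|c_2'\|_{L^2}^2+2k^2|c_3|^2$, which is dominated by $4\|c_1\|_{H^1}^2+\|c_2'\|_{L^2}^2+3k^2|c_3|^2$; this is precisely the second hypothesis and explains the coefficients $4$ and $3$, the slack covering the trace constant. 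With both bounds in~\eqref{E:L} verified, Theorem~\ref{T:1.1} yields polynomial stability of $A+BC$ with $\alpha=2$. Finally, since $BC$ is bounded we have $\dom(A+BC)=\dom A$, and applying the semigroup estimate $\|T_{A+BC}(t)(A+BC)^{-1}\|\le M t^{-1/2}$ to initial data $(w_0',w_1,a_0,a_1)^\top\in\dom A$, together with the equivalence of $\|A\,\cdot\,\|_\cH$ with the $H^2\times H^1\times\dC^2$ graph norm, converts the abstract decay into the concrete estimate~\eqref{ABcondecay}.
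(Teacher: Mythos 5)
Your proposal follows the paper's proof essentially step for step: the same rank-one factorization $A+BC$ with $Y=\dC$, the same computation of the adjoint (your formula $A^\ast v=(-v_2',-v_1',-v_4,\,v_1(1)+kv_3-dv_4)^\top$ with boundary relations $v_2(0)=0$, $v_2(1)=v_4$ agrees exactly with the paper), the same identities $\norm{AB}=\norm{b_2'}_{L^2}$ and $\norm{A^\ast C^\ast}^2=\norm{c_1'}_{L^2}^2+\norm{c_2'}_{L^2}^2+|c_1(1)+kc_3|^2$, and the same absorption of the boundary term: the paper uses $|c_1(1)|\leq\norm{c_1}_{L^2}+\norm{c_1'}_{L^2}$ with $(x+y+z)^2\leq 3(x^2+y^2+z^2)$, you use $|f(1)|^2\leq 2\norm{f}_{L^2}^2+\norm{f'}_{L^2}^2$; both routes land on the bound $4\norm{c_1}_{H^1}^2+\norm{c_2'}_{L^2}^2+3k^2|c_3|^2$. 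Your observations that the Hilbert--Schmidt condition is automatic for rank one and that $\norm{T_{A+BC}(t)(A+BC)^{-1}}=O(t^{-1/2})$ converts into~\eqref{ABcondecay} via the equivalence of graph norms are correct; the paper leaves these implicit.

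One substantive point deserves flagging. You verify $\ran C^\ast\subset\dom A^\ast$ under the condition $c_2\in H_0^1(0,1)$, whereas Theorem~\ref{T:ABcon1} (and the paper's proof) assume only $c_2\in H^1(0,1)$. You are in fact right that the endpoint constraint is needed: the adjoint boundary conditions fall on the second component, and since $(C^\ast 1)_4=c_4=0$, membership of $C^\ast 1$ in $\dom A^\ast$ forces $c_2(0)=0$ and $c_2(1)=0$, which $c_2\in H^1$ alone does not provide. The paper's proof simply asserts that the assumptions imply $\ran C^\ast\subset\dom A^\ast$ without comment, so your blind reconstruction has exposed a looseness in the stated hypotheses rather than committed an error --- but as a proof of the theorem \emph{as stated} you should have flagged explicitly that you are strengthening $c_2\in H^1$ to $c_2\in H_0^1$. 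A smaller bookkeeping discrepancy: you insert $b_2$ into the slot carrying the $w_{tt}$-equation, so that membership in $\dom A$ is automatic and $b_2\in H_0^1$ serves to kill the $a_t$-component $-b_2(1)$ of $AB$; the paper instead takes $B=(0,b_2(\cdot),0,0)^\top$, for which $b_2\in H_0^1$ is exactly what places $\ran B$ in $\dom A$, the fourth component of $AB$ then vanishing automatically. The two bookkeepings differ because the paper's declared state ordering $(w_x,w_t,a,a_t)^\top$ does not match its own generator matrix (whose fourth row $-C_0u_1$ and domain conditions on $u_2$ correspond to the ordering $(w_t,w_x,a,a_t)^\top$); under either consistent reading one gets $\norm{AB}=\norm{b_2'}_{L^2}$, so none of your estimates is affected.
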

  \begin{proof}
 The perturbed system operator can be written as $A+BC$ where
 $B=(0,b_2(\cdot),0,0)^\top\in \Lin(\dC,\cH)$ and $C=(\langle\cdot,c_1(\cdot)\rangle_{L^2},\langle\cdot,c_2(\cdot)\rangle_{L^2},\ov{c_3},0)\in \Lin(\cH,\dC)$ with $b_2\in H_0^1(0,1)$, $c_1,c_2\in H^1(0,1)$ and $c_3 \in \mathbb C$.
A straightforward computation shows that the adjoint operator of $A$ has the form
\begin{equation*}
  A^*=
  \begin{pmatrix}
    0&-\partial _x&0&0\\
    -\partial_x&0&0&0\\
    0&0&0&-1\\
    C_0&0&k&-d
  \end{pmatrix}
\end{equation*}
and that its domain $\Dom A^\ast$ contains the subspace
$$
\left\{
 (u_1(\cdot),
  u_2(\cdot),
  u_3,
  u_4)^\top
\in H^1\times H^1\times \mathbb C\times \mathbb C
:u_2(1)=u_4, u_2(0)=0\right\}.
$$
The assumptions therefore imply that $\ran B\subset \Dom A$ and $\ran C^\ast \subset \dom A^\ast$, and 
\begin{align*}
  \norm{AB}&=\norm{b_2'}_{L^2},  \\
  \norm{A^\ast C^\ast}^2 
  &= \norm{c_1'}_{L^2}^2 + \norm{c_2'}_{L^2}^2 + |c_1(1)+kc_3|^2\\
  &\leq \norm{c_1}_{L^2}^2 + \norm{c_2'}_{L^2}^2 + \left(\norm{c_1}_{L^2}+\norm{c_1'}_{L^2}+ k|c_3|\right)^2\\
  &\leq 4\norm{c_1}_{H^1}^2 + \norm{c_2'}_{L^2}^2 +  3k^2|c_3|^2.
\end{align*}
Here we have used the property $|c_1(1)|\leq \norm{c_1}_{L^2}+\norm{c_1'}_{L^2}$, which can be verified using the identity $c_1(1)=\int_0^1 \frac{d}{dx}(xc_1(x))dx = \int_0^1(c_1(x)+xc_1'(x))dx$.
Thus the claim follows from Theorem~\ref{T:1.1} with $\beta=\gamma=1$.
  \end{proof}

   Similarly, applying Theorem \ref{T:1.1} with $\beta=2$ and $\gamma=0$ we obtain the following alternative conditions for the polynomial stability of~\eqref{ABconPer}.

 \begin{theorem}
Assume $\kappa>0$ is as in Theorem~\textup{\ref{T:1.1}} with $\beta=2$ and $\gamma=0$.
If $b_2\in H_0^1(0,1)\cap H^2(0,1)$, $c_1,c_2\in L^2(0,1)$, and $c_3,c_4\in\dC$ satisfy
\begin{align*}
     \|b_2'\|_{H^1}<\kappa,
     \qquad 
     \|c_1\|^2_{L^2}+\|c_2\|^2_{L^2}+k|c_3|^2+|c_4|^2<\kappa^2,
   \end{align*}
   then \eqref{ABconPer} is polynomially stable with $\a=2$.
For such perturbations there exists $\Mpert>0$ such that the solutions of~\eqref{ABconPer} corresponding to 
initial conditions $w_0,w_1,a_0,a_1$ such that $(w_0',w_1,a_0,a_1)^\top\in \Dom A $
 satisfy~\eqref{ABcondecay} for all $t>0$.  
\end{theorem}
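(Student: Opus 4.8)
The plan is to mirror the structure of the proof of Theorem~\ref{T:ABcon1}, only now using Theorem~\ref{T:1.1} with the parameter choice $\beta=2$ and $\gamma=0$ rather than $\beta=\gamma=1$. First I would write the perturbed system operator in the abstract form $A+BC$ with the rank-one operators
\begin{align*}
  B &= (0,b_2(\cdot),0,0)^\top \in \Lin(\dC,\cH), \\
  C &= (\langle\cdot,c_1\rangle_{L^2},\langle\cdot,c_2\rangle_{L^2},\ov{c_3},\ov{c_4}) \in \Lin(\cH,\dC),
\end{align*}
so that the perturbation reproduces the integral and boundary terms in~\eqref{ABconPer}. Since now $c_4$ need not vanish, $C$ is a genuinely four-component functional, and with $\gamma=0$ the condition $\ran C^\ast \subset \dom(-A^\ast)^0 = \cH$ is automatic, so no regularity on $c_1,c_2,c_3,c_4$ is needed beyond membership in the state space, and $\norm{(-A^\ast)^0 C^\ast} = \norm{C^\ast}$ is computed directly as $\norm{C^\ast}^2 = \norm{c_1}_{L^2}^2 + \norm{c_2}_{L^2}^2 + k|c_3|^2 + |c_4|^2$ using the weighted inner product of $\cH$ (the weight $k$ on the third coordinate accounts for the $k u_3\ov{v_3}$ term).

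The genuinely new work is the $\beta=2$ side, i.e. verifying $\ran B \subset \dom(-A)^2$ and estimating $\norm{(-A)^2 B} = \norm{A^2 B}$. For this I would first characterise $\dom A^2 = \{u\in\dom A : Au\in\dom A\}$ by applying $A$ twice to $B = (0,b_2,0,0)^\top$. Since $Au$ for $u=(u_1,u_2,u_3,u_4)^\top$ has the form $(\partial_x u_2,\partial_x u_1,u_4,-u_2(1)-ku_3-du_4)^\top$, one computes $AB = (b_2',0,0,-b_2(1))^\top$ and then $A^2 B = (0, b_2'', b_2(1), -b_2'(1))^\top$ after accounting for the boundary coupling. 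Membership $B\in\dom A^2$ then requires $b_2\in H^2(0,1)$ together with the boundary constraints coming from $\dom A$ applied to the intermediate vector $AB$; the hypothesis $b_2\in H_0^1(0,1)\cap H^2(0,1)$ is precisely what is needed, since $b_2(0)=0$ handles the condition $u_2(0)=0$ at the first level and the intermediate vector's second component $0$ trivially satisfies $u_2(1)=u_4$. I would then bound $\norm{A^2 B}^2$ by a constant multiple of $\norm{b_2'}_{H^1}^2$, controlling the boundary point-values $|b_2(1)|$ and $|b_2'(1)|$ by $H^1$-norms of $b_2$ and $b_2'$ via the trace estimate $|f(1)|\leq \norm{f}_{L^2}+\norm{f'}_{L^2}$ already used in Theorem~\ref{T:ABcon1}.

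I expect the main obstacle to be the bookkeeping of the boundary terms in the explicit form of $A^2 B$, because the third row of $A$ ($u_3\mapsto u_4$) and the last row (containing the trace $C_0$ and the coefficients $-k,-d$) mix the coordinates in a way that must be tracked carefully to confirm both the domain membership and the norm bound. Once $\norm{A^2 B}$ is shown to satisfy $\norm{A^2 B} \leq \norm{b_2'}_{H^1} < \kappa$ and $\norm{C^\ast} < \kappa$ under the stated hypotheses, the conclusion follows immediately from Theorem~\ref{T:1.1} with $\beta+\gamma = 2 \geq \alpha$; note $B,C$ are finite rank hence trivially Hilbert--Schmidt. The decay estimate~\eqref{ABcondecay} for the solutions of~\eqref{ABconPer} then transfers verbatim from the stability of the perturbed semigroup exactly as in the unperturbed case, since the state-space norm on $\cH$ encodes the left-hand side of~\eqref{ABcondecay} and the graph norm of $A$ encodes the right-hand side.
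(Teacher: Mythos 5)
Your proposal follows the paper's proof in all essentials: write the perturbation as the rank-one pair $B=(0,b_2,0,0)^\top$, $C$ with $C^\ast=(c_1,c_2,c_3,c_4)^\top$, note that the $\gamma=0$ side is trivial with $\norm{C^\ast}^2=\norm{c_1}_{L^2}^2+\norm{c_2}_{L^2}^2+k|c_3|^2+|c_4|^2$ in the weighted norm of $\cH$, verify $\ran B\subset\dom A^2$, compute $A^2B$ explicitly, bound it via the trace inequality, and invoke Theorem~\ref{T:1.1} with $\beta=2$, $\gamma=0$ (finite rank gives Hilbert--Schmidt for free, as you say). However, your bookkeeping of $A$ is off: in the paper the trace $C_0$ sits in the $(4,1)$ entry, so the fourth component of $Au$ is $-u_1(1)-ku_3-du_4$, not $-u_2(1)-ku_3-du_4$ as you write (with your row four and the domain conditions on $u_2$, the operator would not even be dissipative). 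Consequently $AB=(b_2',0,0,0)^\top$ with no $-b_2(1)$ entry; the role of $b_2(1)=0$ (from $b_2\in H_0^1$) is solely to place $B$ in $\dom A$ via $u_2(1)=u_4=0$, a first-level condition your domain check omits (you only mention $b_2(0)=0$). Your displayed $A^2B=(0,b_2'',b_2(1),-b_2'(1))^\top$ is moreover inconsistent with your own row-four formula, which would give third component $-b_2(1)$ and fourth component $d\,b_2(1)$; it agrees with the paper's correct $A^2B=(0,b_2'',0,-b_2'(1))^\top$ only because $b_2(1)=0$. These slips wash out under the stated hypotheses and are repairable without new ideas.

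One quantitative claim does not survive as written: you assert $\norm{A^2B}\leq\norm{b_2'}_{H^1}$, but the trace estimate only yields $\norm{A^2B}^2=\norm{b_2''}_{L^2}^2+|b_2'(1)|^2\leq 3\norm{b_2'}_{H^1}^2$, which is the paper's bound; the constant cannot be taken to be $1$, since $|b_2'(1)|^2$ may exceed $\norm{b_2'}_{L^2}^2$. So your final step should carry the factor $\sqrt{3}$, exactly as the paper does. (Read strictly, even the paper's proof then needs the smallness hypothesis interpreted as $\sqrt{3}\,\norm{b_2'}_{H^1}<\kappa$ to conclude $\norm{A^2B}<\kappa$ in Theorem~\ref{T:1.1}; this is a cosmetic rescaling, since $\kappa$ may always be taken smaller, but your cleaner inequality would hide rather than resolve it.)
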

  \begin{proof}
    The perturbations have the same form as in the proof of Theorem~\ref{T:ABcon1}. 
    Since $B=b:=(0,b_2(\cdot),0,0)^\top$ with $b_2\in H_0^1(0,1)\cap H^2(0,1)$, we have
     $(0,b_2(\cdot),0,0)^\top\in\dom A$ and
$Ab=(b'_2(\cdot),0,0,0)^\top\in\dom A$.
Thus $\ran B\subset \dom A^2$ and
     \begin{align*}
       A^2 \pmat{0\\b_2\\0\\0}=A \pmat{b_2'\\0\\0\\0} = \pmat{0\\b_2''\\0\\-b_2'(1)}.
    \end{align*} 
    implies $\norm{A^2B}^2=\norm{b_2''}_{L^2}^2+|b_2'(1)|^2\leq 3\norm{b_2'}_{H^1}^2$, since $|b_2'(1)|^2\leq 2\norm{b_2'}_{H^1}^2$ similarly as in the proof of Theorem~\ref{T:ABcon1}.
The claim now follows from Theorem~\ref{T:1.1} with the choices $\beta=2$ and $\gamma=0$.  
  \end{proof}

\end{document}